\documentclass[12pt]{amsart}
\usepackage{amsmath,amsthm,latexsym,amscd,amsbsy,amssymb,amsfonts,fleqn,leqno,
euscript, pb-diagram}

% fix to Rightarrow problem from Walter Schmidt (comp.text.tex, 2003-01-27)
\let\SavedRightarrow=\Rightarrow
\usepackage{marvosym}
\let\Rightarrow=\SavedRightarrow

\newcommand{\Tee }{\mathcal T}

\newcommand{\cl}{\operatorname{cl}}
\renewcommand{\int}{\operatorname{Int}}

\newcommand{\reg}{\mathrm{e}}

\newtheorem{thm}{Theorem}[section]
\newtheorem{pro}[thm]{Proposition}
\newtheorem{lem}[thm]{Lemma}
\newtheorem{que}[thm]{Question}
\newtheorem{cor}[thm]{Corollary}

\newtheorem*{MainTheorem1}{Theorem \ref{3.3}}
\setlength{\parskip}{0.2cm}

\sloppy

\begin{document}

\title{Skeletally  Dugundji spaces}
\subjclass[2000]{Primary: 54B35,  91A44; Secondary: 54C10}
\keywords{Inverse system; very I-favorable space; skeletal map,
$\kappa$-metrizable compact space, d-open map}

\author{A. Kucharski}
\address{Institute of Mathematics, University of Silesia, ul. Bankowa 14, 40-007 Katowice}
\email{akuchar@ux2.math.us.edu.pl}

\author{Sz. Plewik}
\address{Institute of Mathematics, University of Silesia, ul. Bankowa 14, 40-007 Katowice}
\email{plewik@math.us.edu.pl}

\author{V. Valov}
\address{Department of Computer Science and Mathematics, Nipissing University,
100 College Drive, P.O. Box 5002, North Bay, ON, P1B 8L7, Canada}
\email{veskov@nipissingu.ca}
\thanks{The third author was supported by NSERC Grant 261914-08}

\keywords{absolute, Dugundji spaces, inverse system, open maps, skeletally Dugundji spaces, skeletal maps}

\subjclass{Primary 54C10; Secondary 54F65}

%%%%%%%%%% End topmatter %%%%%%%%%%%%%%%%%%%%%

\begin{abstract}
We introduce and investigate the class of skeletally Dugundji spaces as a skeletal analogue of Dugundji space. 
The main result states that the following conditions are equivalent for a given space $X$: (i) $X$ is skeletally Dugundji;
(ii) Every compactification of $X$ is co-absolute to a Dugundji space;
(iii) Every $C^*$-embedding of the absolute $p(X)$ in another space is strongly $\pi$-regular;
(iv) $X$ has a multiplicative lattice in the sense of Shchepin \cite{s76} consisting of skeletal maps.
\end{abstract}

\maketitle 
\markboth{}{Skeletally  Dugundji spaces}

\section{Introduction}

In this paper we introduce a class of skeletally Dugundji spaces as a skeletal analogue of Dugundji spaces \cite{p}. 
The paper can be considered as a continuation of \cite{kp7},  \cite{kp8}, \cite{kp9} and \cite{vv}, where it was shown that 
I-favorable  spaces \cite{dkz} coincide with the class of skeletally generated spaces \cite{vv}. The last class is a
skeletal counterpart of $\kappa$-metrizable compacta \cite{s81}.

Recall that a map $f:X \to Y$ is called
\textit{skeletal} \cite{mr} (resp., {\em semi-open}) if the set $\int_Y\cl_Y f(U)$ (resp., $\int_Yf(U)$) is non-empty, for
any $U\in \Tee_X$. Obviously, every semi-open map is skeletal, and both notions are equivalent for closed maps.
Our definition of skeletally Dugundji spaces is similar to the spectral characterization of Dugundji spaces obtained by
Haydon \cite{ha}. We say that a space $X$ is {\em skeletally Dugundji} if there exists
an inverse system
$\displaystyle S=\{X_\alpha, p^{\beta}_\alpha, \alpha<\beta<\tau\}$ with surjective skeletal bonding maps, where $\tau$ is identified
with the first
ordinal $\omega(\tau)$ of cardinality $\tau$, satisfying the following conditions: (i) $X_0$ is a separable metric space
and all maps $p^{\alpha+1}_\alpha$ have metrizable kernels (i.e., there exists a separable metric space $M_\alpha$ such that
$X_{\alpha+1}$ is embedded in $X_{\alpha}\times M_\alpha$ and $p^{\alpha+1}_\alpha$ coincides with the restriction $\pi|X_{\alpha+1}$ of
the projection $\pi\colon X_{\alpha}\times M_\alpha\to X_{\alpha}$);
(ii) for any limit ordinal $\gamma<\tau$ the space $X_\gamma$ is a (dense)
subset of
$\displaystyle\lim_\leftarrow\{X_{\alpha},p^\beta_\alpha, \alpha<\beta<\gamma\}$;
(iii) $X$ is embedded in $\displaystyle\lim_\leftarrow
S$ such that $p_\alpha(X)=X_\alpha$ for each $\alpha$, where
$p_\alpha\colon\displaystyle\lim_\leftarrow S\to X_\alpha$ is the
$\alpha$-th limit projection; (iv) for every bounded continuous
real-valued function $f$ on $\displaystyle\lim_\leftarrow S$ there exists $\alpha\in A$ such that $p_\alpha\prec f$ (the last relation means that  there exists a continuous function $g$ on  $X_\alpha$ with $f=g\circ p_\alpha$).
If the inverse system $S$ and $X$ satisfy conditions (ii) and (iii) $X$ is said to be the {\em almost limit} of $S$, notation $X=\mathrm{a}-\displaystyle\lim_\leftarrow S$.  We also say that an inverse system $S$ is factorizing if it satisfies condition $(iv)$.

In the paper we provide different characterizations of skeletally Dugundji spaces. One of our starting points was the result of Shapiro \cite{sh}
that a compactum $X$ is co-absolute to a Dugundji space (i.e., the absolute of $X$ is the absolute of a Dugundji space) if and only if $X$ is the limit space of a continuous inverse system
$\displaystyle S=\{X_\alpha, p^{\beta}_\alpha, \alpha<\beta<\tau\}$ satisfying all conditions from the definition of skeletally Dugundji spaces, excepts that the maps $p^{\alpha+1}_\alpha$ don't necessarily have metrizable kernels but have countable $\pi$-weight (see Section 3 for this notion). Let us note that necessity of the above result was announced in \cite[Theorem 3]{sh} without a proof.
We establish in Section 2 that any space co-absolute to a space with a multiplicative in the sense of Shchepin \cite{s76} lattice of open maps
has a multiplicative lattice of skeletal maps. This extends the necessity of Shapiro's result, mentioned above. 
Some properties of skeletally Dugundji spaces are provided in Section 3. The following result is the main theorem in this section.   
\begin{MainTheorem1}
For any space $X$ the following are equivalent:
\begin{itemize}
\item[(i)] $X$ is skeletally Dugundji;
\item[(ii)] Every compactification of $X$ is co-absolute to a Dugundji space;
\item[(iii)] Every $C^*$-embedding of the absolute $p(X)$ in another space is strongly $\pi$-regular;
\item[(iv)] $X$ has a multiplicative lattice of skeletal maps.
\end{itemize}
\end{MainTheorem1}

Here, we say that a subspace $X$ of a space $Y$ is strongly $\pi$-regularly embedded in $Y$ if there exists a function $\mathrm{e}\colon\Tee_X\to\Tee_Y$ between the topologies of $X$ and $Y$ such that:
\begin{itemize}
\item $(e1)$ $\mathrm{e}(\varnothing)=\varnothing$ and $\mathrm{e}(U)\cap X$ is a dense subset of $U$;
\item $(e2)$ $\mathrm{e}(U\cap V)=e(U)\cap\mathrm{e}(V)$ for any $U,V\in\Tee_X$.
\end{itemize}
Such a function was considered in \cite{sh1} under the name $\pi$-regular operator. It follows from Corollary 3.2 that if
every embedding of a compactum $X$ in another space is strongly $\pi$-regular, then $X$ is skeletally Dugundji. A positive answer to the
following question would provide an external characterization of skeletally Dugundji spaces similar to the characterization of Dugundji spaces
given by Shirokov in \cite{shi}. 

\begin{que}
Is any embedding of a skeletally Dugundji compactum in a Tychonoff cube strongly $\pi$-regular?
\end{que}
Another question arises from Corollary 3.6 that if $X$ is a skeletally
Dugundji space, then the absolute $p(\beta X)$ of $\beta X$ is a retract of any
extremally disconnected space in which $p(\beta X)$ is embedded.

\begin{que}
 Let X be a compact space such that its absolute $p(X)$
is a retract of any extremally disconnected space containing $p(X).$ Is
X skeletally Dugundji?
\end{que}

When $X$ is 0-dimensional, this questions is equivalent to the open
problem after Observation 5.3.10 from \cite{hsh}. According to \cite{sh2}, Question
1.2 has a positive answer if the weight of X is $\leq \omega_1.$

All spaces in this paper are Tychonoff and the single-valued maps are continuous.

\section{Spaces co-absolute with $AE(0)$-spaces}

Haydon \cite{ha} established that the class of compact absolute extensors for zero-dimensional spaces (br., $AE(0)$-spaces) coincides with the class of Dugundji spaces and any compactum $X$ belongs
to that class iff $X$ can be represented as the limit space of a continuous inverse system
$\displaystyle S=\{X_\alpha, p^{\beta}_\alpha, \alpha<\beta< w(X)\}$ with open bonding maps such that each map $p^{\alpha+1}_\alpha$ has a metrizable kernel, see \cite{ha}.  Dugundji spaces can be also characterized as the compact spaces $X$ possessing a multiplicative
lattice in the sense of Shchepin \cite{s76} consisting of open maps. This means that there exists a family $\Psi$ of open maps
with domain $X$ such that:
\begin{itemize}
\item[(L1)] For any map $f\colon X\to f(X)$ there exists $\phi\in\Psi$ with $\phi\prec f$ and $w(\phi(X))\leq w(f(X))$;
\item[(L2)] $\Psi$ is multiplicative, i.e., if $\{\phi_\alpha:\alpha\in A\}\subset\Psi$, then the diagonal product
$\triangle\{\phi_\alpha:\alpha\in A\}$ belongs to $\Psi$.
\end{itemize}
Let us note that a general definition of $AE(0)$ in the class of all Tychonoff spaces was introduced by Chigogidze \cite{ch}. By \cite[Theorem 1]{v2}, for every $C$-embedding of an $AE(0)$-space $X$ in $\mathbb R^A$ there exists an upper semi-continuous compact-valued map 
$r\colon\mathbb R^A\to X$  with $r(x)=\{x\}$ for all $x\in X$. Then, following the terminology of \cite[Lemmas 3-6]{v2}, all restrictions $\pi_B|X$, where $B\subset A$ is $r$-admissible, are open and form a multiplicative lattice. Therefore, every $AE(0)$-space possesses a multiplicative lattice of open maps.

\begin{pro}\label{og}
Let $X$ be a $C$-embedded in $\mathbb R^\Gamma$ for some $\Gamma$ and $X$ has a multiplicative lattice $\Psi.$ Then the family $\mathcal A=\{B\subset\Gamma: \phi_B\in\Psi\hbox{~}\}$, where $\phi_B=\pi_B|X\colon X\to\pi_B(X)$ is the restriction of the projection $\pi_B:\mathbb R^\Gamma\to\mathbb R^B$, has the following properties:
\begin{itemize}
\item[(i)] $\mathcal A$ is additive, i.e., the union of any elements from $\mathcal A$ is also from $\mathcal A$;
\item[(ii)] Every $C\subset\Gamma$ is contained in some  $B\in\mathcal A$ with $|B|\leq |C|\cdot\aleph_0$.
\end{itemize}
\end{pro}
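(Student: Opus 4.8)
The plan is to derive both conclusions directly from the defining properties (L1) and (L2) of the multiplicative lattice $\Psi$, translated into statements about the index family $\mathcal A$. The key observation is the standard fact relating diagonal products of restricted projections to unions of coordinate sets: for a family $\{B_\alpha : \alpha\in A\}$ of subsets of $\Gamma$, the diagonal product $\triangle\{\phi_{B_\alpha}:\alpha\in A\}$ is, up to a homeomorphism of the target, exactly the map $\phi_B$ where $B=\bigcup_\alpha B_\alpha$; more precisely $\phi_B$ and $\triangle\{\phi_{B_\alpha}\}$ have the same fibers, hence are equivalent in the preorder $\prec$, and since $\Psi$ is easily checked to be closed under the equivalence induced by $\prec$ (if $\phi\in\Psi$ and $\phi$, $\psi$ have the same fibers with $\psi$ a restricted projection, then $\psi\in\Psi$ — this uses that $\Psi$ is a lattice in Shchepin's sense and should be remarked), we get $\phi_B\in\Psi$. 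This gives (i): if each $B_\alpha\in\mathcal A$, then $\phi_{B_\alpha}\in\Psi$, so by (L2) the diagonal product lies in $\Psi$, so $\phi_{\bigcup B_\alpha}\in\Psi$, i.e. $\bigcup_\alpha B_\alpha\in\mathcal A$.

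For (ii) I would argue as follows. Given $C\subset\Gamma$, consider the projection $\pi_C|X\colon X\to\pi_C(X)$; since $\pi_C(X)\subset\mathbb R^C$ we have $w(\pi_C(X))\le |C|\cdot\aleph_0$. By (L1) there is $\phi\in\Psi$ with $\phi\prec \pi_C|X$ and $w(\phi(X))\le |C|\cdot\aleph_0$. Now $\phi$ need not itself be a restricted projection, but by the lattice/equivalence remark above there is $B\subset\Gamma$ with $\phi_B\in\Psi$ and $\phi_B$ equivalent to $\phi$ (same fibers); the point is to choose such a $B$ of small cardinality. Since $\phi\prec\pi_C|X$, the map $\phi$ factors through $\pi_C|X$, hence $\phi$ depends only on the coordinates in $C$ in the sense that its fibers refine those of $\pi_C|X$; thus we may take $B\subset C$ after enlarging if necessary — but more carefully: the fibers of $\phi$ are unions of fibers of $\pi_C|X$, so $\phi$ factors as $h\circ(\pi_C|X)$ for a continuous $h$ on $\pi_C(X)$ with $w(h(\pi_C(X)))\le|C|\cdot\aleph_0$; choosing a subset $B\subset C$ with $|B|\le w(\phi(X))\le|C|\cdot\aleph_0$ so that $h$ already factors through $\pi_B$ on $\pi_C(X)$ (possible since $\phi(X)$ has a network of size $\le|C|\cdot\aleph_0$, each basic set being determined by finitely many coordinates from $C$), we obtain that $\phi_B$ has the same fibers as $\phi$, hence $\phi_B\in\Psi$, i.e. $B\in\mathcal A$, and $C\subset B$ after replacing $B$ by $B\cup C$ — wait, that would make $B$ large; instead note we only need $C$ contained in *some* element of $\mathcal A$, so we form $B'=B\cup C$: we have just shown $B\in\mathcal A$, but we want $C\subset B'\in\mathcal A$. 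To fix this, observe $C$ itself satisfies $\phi_C\prec\pi_C|X$ trivially; apply the preceding argument to get $B\in\mathcal A$ with the fibers of $\phi_B$ refining those of $\phi_C$... this is circular. The clean route: apply (L1) to $f=\pi_{C}|X$ to get $\phi\in\Psi$, $\phi\prec\pi_C|X$; then $\phi$ is equivalent to some $\phi_{B_0}\in\Psi$ with $B_0$ of size $\le|C|\cdot\aleph_0$ by the network argument; finally set $B=B_0\cup C$ and use (i) together with the already-available membership of small sets — but is $C\in\mathcal A$? Not a priori. So instead: cover $C$ by singletons, note each singleton $\{\gamma\}\subset\Gamma$: is $\{\gamma\}\in\mathcal A$? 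Again not a priori.

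The resolution, and the step I expect to be the main obstacle, is this: one must show that for every $C$ there is $B\supseteq C$ with $\phi_B\in\Psi$ — equivalently that the map $\pi_C|X$ itself is dominated (in $\prec$, in the *other* direction) by some element of $\Psi$. This is where one genuinely uses that $\Psi$ is a *lattice* and the structure of $\mathbb R^\Gamma$: one iterates (L1). Start with $f_0=\pi_C|X$, get $\phi_0\in\Psi$ with $\phi_0\prec f_0$, equivalent to $\phi_{B_0}$ with $|B_0|\le|C|\cdot\aleph_0$ (network argument); but $B_0$ may omit coordinates of $C$. Then consider $\pi_{C\cup B_0}|X$, apply (L1) again to get $\phi_1\in\Psi$ equivalent to $\phi_{B_1}$ with $B_0\cup(\text{part of }C)\subset$... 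Actually the correct and standard argument: build an increasing $\omega$-chain $B_0\subset B_1\subset\cdots$ in $\mathcal A$ with $C\subset\bigcup_n B_n$, at each stage using (L1) applied to $\pi_{C\cup B_n}|X$ to obtain $\phi_{n+1}\in\Psi$ whose fibers refine those of $\pi_{C\cup B_n}|X$ and which is equivalent to $\phi_{B_{n+1}}$ with $B_n\cup (C\cap(\text{first }n+1\text{ enumerated coords}))\subset B_{n+1}$ and $|B_{n+1}|\le|C|\cdot\aleph_0$; by (i), $\bigcup_n B_n\in\mathcal A$ (using additivity for the countable union, which (i) gives), and it contains $C$, with cardinality $\le|C|\cdot\aleph_0$. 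The only subtle point is the network/factorization lemma asserting that any $\phi\in\Psi$ with $\phi\prec\pi_D|X$ is equivalent (same fibers) to $\phi_B$ for some $B\subset D$ with $|B|\le w(\phi(X))$, together with the remark that $\Psi$, being a lattice in Shchepin's sense, contains $\phi_B$ whenever it contains a map with the same fibers; I would isolate these as a preliminary observation before the main two-step argument.
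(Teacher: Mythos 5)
Your part (i) is fine and is essentially the paper's argument: $\phi_{\bigcup B_\alpha}$ and $\triangle\phi_{B_\alpha}$ have the same fibers and mutually factor through each other, so (L2) gives the claim. The problem is in (ii), and it is not a presentational issue but a genuine gap. First, you read $\prec$ backwards at the crucial point: with the paper's convention, $\phi\prec\pi_C|X$ means $\pi_C|X=g\circ\phi$, i.e.\ the projection factors through $\phi$; it does \emph{not} mean that $\phi$ factors through $\pi_C|X$, so $\phi$ need not ``depend only on the coordinates in $C$''. Second, the ``network/factorization lemma'' you isolate --- that any $\phi\in\Psi$ with $\phi\prec\pi_D|X$ and $w(\phi(X))\le\tau$ is fiber-equivalent to some restricted projection $\phi_B$ with $B$ small --- is false: already on $\mathbb R^2$ the map $(x,y)\mapsto x+y$ has second countable image but its fibers are not the fibers of any coordinate projection. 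Since each step of your iteration puts $B_{n+1}$ into $\mathcal A$ only via this false equivalence, the sets $B_n$ need not belong to $\mathcal A$, and your final appeal to additivity (i) collapses; as you yourself noticed mid-argument, trying to get single members of $\mathcal A$ above $C$ directly is circular, and the equivalence lemma does not break the circle.

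What actually breaks it is the hypothesis you never use: that $X$ is $C$-embedded in $\mathbb R^\Gamma$. Given $\phi_k\in\Psi$ with $w(\phi_k(X))\le\tau$, view $\phi_k(X)\subset\mathbb R^\tau$ and extend $\phi_k$ to $g_k\colon\mathbb R^\Gamma\to\mathbb R^\tau$; since $g_k$ depends on at most $\tau$ coordinates, there is $B(k+1)\supset B(k)$ with $|B(k+1)|\le\tau$ and $\phi_{B(k+1)}\prec\phi_k$ --- only this one-sided relation, not an equivalence. Alternating this with (L1) (applied to $\phi_{B(k+1)}$ to get $\phi_{k+1}\in\Psi$ with $\phi_{k+1}\prec\phi_{B(k+1)}$, $w(\phi_{k+1}(X))\le\tau$) produces an interleaved chain $\phi_{B(k+1)}\prec\phi_k\prec\phi_{B(k)}$ starting from $B(1)=C$. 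Setting $B=\bigcup_k B(k)$ and $\phi=\triangle_k\phi_k$, the interleaving shows $\phi$ and $\phi_B$ have the same fibers, and $\phi\in\Psi$ by (L2); hence $\phi_B\in\Psi$, i.e.\ $C\subset B\in\mathcal A$ with $|B|\le|C|\cdot\aleph_0$. So membership of the limit set in $\mathcal A$ comes from multiplicativity applied to the whole interleaved sequence, not from additivity applied to individual members --- that is the idea missing from your proposal.
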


\begin{proof}
Let $\Psi$ be a multiplicative lattice on $X$ consisting of quotient maps.
Suppose $B=\bigcup B_\alpha$
with $B_\alpha\in\mathcal A$ for all $\alpha$. Then
 $\phi_B=\triangle\phi_{B_\alpha}$ because $\triangle\phi_{B_\alpha}\in\Psi$ is quotient. So, $B\in\mathcal A$.  Assume $C\subset\Gamma$ is  infinite of cardinality $|C|=\tau$. We construct by
induction an increasing sequence $\{B(k)\}\subset\Gamma$ and a sequence $\{\phi_k\}\subset\Psi$ such that $B(1)=C$, $|B(k)|=\tau$,
$w(\phi_k(X))\leq\tau$ and
$\phi_{B(k+1)}\prec\phi_k\prec\phi_{B(k)}$ for all $k$. Suppose the construction is done up to level $k$ for some $k\geq 1$.  We consider each
$\phi_k(X)$ as a subspace of $\mathbb R^{\tau}$. Since $X$ is $C$-embedded
in $\mathbb R^\Gamma$, there exists a map $g_k\colon\mathbb R^\Gamma\to\mathbb R^{\tau}$ extending $\phi_k$. Then $g_k$ depends on $\tau$
many coordinates of $\mathbb R^\Gamma$, so we can find a set $B(k+1)\subset\Gamma$ of cardinality $\tau$ containing $B(k)$ such that $\pi_{B(k+1)}\prec g_k$.
Consequently, $\phi_{B(k+1)}\prec \phi_k$. Next, by condition $(\mathrm{L1})$, there exists $\phi_{k+1}\in\Psi$ with $\phi_{k+1}\prec \phi_{B(k+1)}$ and
$w(\phi_{k+1}(X))\leq\tau$. This completes the construction. Finally, let $B=\bigcup_{k=1}^{\infty}B(k)$ and
$\phi=\triangle_{k=1}^{\infty}\phi_k$. Obviously, $|B|=\tau$ and $\phi_B=\phi\in\Psi$. Hence, $C\subset B\in\mathcal A$.
\end{proof}

Shapiro \cite[Theorem 3]{sh} stated without a proof that if a compactum $X$ is co-absolute to a Dugundji space, then $X$ is the limit space of a continuous inverse system $\displaystyle S=\{X_\alpha, p^{\beta}_\alpha, \alpha<\beta< w(X)\}$
such that $X_0$ is a point, the bonding maps are skeletal and each $p^{\alpha+1}_\alpha$ has a countable $\pi$-weight (see Section 3 for this notion). Next theorem is a generalization of Shapiro's result (recall that any Dugundji space is an $AE(0)$, and hence has a multiplicative lattice of open maps).

\begin{thm}
Let $X$ be a space with a multiplicative lattice of open maps. Then every space co-absolute with $X$ has a
multiplicative lattice of semi-open maps.
\end{thm}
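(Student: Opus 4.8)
The plan is to pass to the common absolute, transport the algebraic content of $\Psi$ there, and realize it back on the co-absolute space by maps that are perfect on each ``metrizable'' step and hence semi-open. Let $Y$ be co-absolute with $X$ and fix perfect irreducible surjections $f\colon p(X)\to X$ and $g\colon p(X)\to Y$ of the common absolute $p(X)$. Since $p(X)$ is extremally disconnected and irreducible maps induce isomorphisms of regular open algebras, $f$ and $g$ give Boolean isomorphisms $\operatorname{RO}(X)\cong\CO(p(X))\cong\operatorname{RO}(Y)$; let $\theta\colon\operatorname{RO}(X)\to\operatorname{RO}(Y)$ be the composition. The elementary observation is that for an open surjection $\phi\colon X\to X'$ the map $V\mapsto\phi^{-1}(V)$ is a \emph{complete} Boolean embedding $\operatorname{RO}(X')\hookrightarrow\operatorname{RO}(X)$: an open map commutes with $\int$ and $\cl$ on preimages, so it carries regular open sets to regular open sets and preserves all operations and arbitrary joins of $\operatorname{RO}$, and it is injective since $\phi$ is onto. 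Hence for $\phi\colon X\to X'$ in $\Psi$ the algebra $A_\phi:=\theta\bigl(\phi^{-1}(\operatorname{RO}(X'))\bigr)$ is a complete subalgebra of $\operatorname{RO}(Y)$ isomorphic to $\operatorname{RO}(X')$, and the map induced on Stone spaces (i.e. on absolutes) by $\phi^{-1}$ is \emph{open}, because a complete embedding of complete Boolean algebras has a lower adjoint.

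For each $\phi\in\Psi$ I would then produce a semi-open surjection $\psi_\phi\colon Y\to Y_\phi$ realizing $A_\phi$, meaning $\{\int_Y\cl_Y\psi_\phi^{-1}(W):W\in\operatorname{RO}(Y_\phi)\}$ is cofinal in $A_\phi$. Using Haydon's/Shchepin's spectral picture, the fact that $X$ has a multiplicative lattice of open maps lets us represent $X$ as an almost limit of a factorizing inverse system with open bonding maps each having a compact metrizable kernel $M_\alpha$; composing with $f$ produces compatible maps from $p(X)$ to all factors and kernels. Transferring this system through $p(X)$ one step at a time — at a successor stage letting $Y_{\alpha+1}$ be the closure inside $Y_\alpha\times M_\alpha$ of the graph of the corresponding pair of maps out of $p(X)$ — yields a system $\{Y_\alpha,q^{\beta}_\alpha\}$ with $Y$ an almost limit, in which every bonding map $q^{\alpha+1}_\alpha$ is \emph{perfect} (the $M_\alpha$ are compact) and skeletal; by the equivalence of skeletal and semi-open for closed maps, each $q^{\alpha+1}_\alpha$ is semi-open. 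The desired $\psi_\phi$ are the projections of this transferred system that realize the algebras $A_\phi$.

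Next I would verify that $\Phi=\{\psi_\phi:\phi\in\Psi\}$ is a multiplicative lattice. For (L2): if $\{\phi_s\}\subseteq\Psi$ then $\triangle_s\phi_s\in\Psi$, and $A_{\triangle_s\phi_s}$ is precisely the complete subalgebra of $\operatorname{RO}(Y)$ generated by $\bigcup_s A_{\phi_s}$, so $\psi_{\triangle_s\phi_s}$ and $\triangle_s\psi_{\phi_s}$ realize the same subalgebra and have homeomorphic ranges. For (L1): given a map $r\colon Y\to r(Y)$, let $\langle r\rangle$ be the complete subalgebra of $\operatorname{RO}(Y)$ generated by the regularizations of the sets $r^{-1}(W)$, $W$ open in $r(Y)$; it has a $\pi$-base of cardinality $\le w(r(Y))$. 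Pulling $\langle r\rangle$ back by $\theta$ and applying (L1) for $\Psi$, together with part (ii) of Proposition \ref{og} to keep cardinalities under control, gives $\phi\in\Psi$ with $\theta^{-1}(\langle r\rangle)\subseteq\phi^{-1}(\operatorname{RO}(X'))$ and $w(X')\le w(r(Y))$; then $\langle r\rangle\subseteq A_\phi$, so $r$ factors through $\psi_\phi$ and $w(Y_\phi)\le w(r(Y))$, which is (L1).

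The crux, and the step I expect to cost the most, is the realization in the second paragraph: converting the Boolean datum $A_\phi$ — which remembers only the ``skeleton'' of $Y$ — into an honest continuous map out of $Y$ whose image of every nonempty open set has nonempty \emph{interior}, not merely is somewhere dense. The device bridging this gap is insisting that each transferred kernel be compact metrizable, so that every bonding map on the $Y$-side is perfect and hence semi-open; the remaining delicate point is checking that the maps out of $p(X)$ stay skeletal through all successor and limit stages of the transfer, and it is exactly here that the openness of the $\phi$'s (equivalently, of the induced maps $p(\phi)$ of absolutes) is used in an essential way.
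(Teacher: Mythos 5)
There is a genuine gap, and it sits exactly where you say you expect it to: the ``realization'' step. Your first and third paragraphs are essentially bookkeeping with regular-open algebras (the transport $\operatorname{RO}(X)\cong\operatorname{RO}(Y)$ through the absolute, and the identification of $A_{\triangle\phi_s}$ with the completely generated subalgebra), but the whole content of the theorem is the second paragraph: producing an honest continuous map $\psi_\phi$ defined on $Y$ itself which is semi-open and through which continuous maps factor. You do not prove this; you only note that it is ``the crux'' and that ``the openness of the $\phi$'s is used in an essential way.'' In the paper this is precisely where all the work lies: one embeds $X\oplus Y$ $C$-embedded into $\mathbb R^\Gamma$, works with the set-valued maps $r_X=\theta_Y\circ\theta_X^{-1}$ and $r_Y=\theta_X\circ\theta_Y^{-1}$, and by a closing-off induction builds an additive cofinal family $\Sigma$ of index sets $B$ satisfying two interlocking conditions (for basic open $V\subset Y_B$, the open set $r_X^{\sharp}(p_B^{-1}(V))$ is dense in a full preimage $\phi_B^{-1}(G_V)$; and for basic open $U\subset X_B$ there is $V_U$ with $p_B^{-1}(V_U)\subset r_Y^{\sharp}(\phi_B^{-1}(U))$). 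Semi-openness of $p_B=\pi_B|Y$ is then proved by showing $\phi_B(x_1)=\phi_B(x_2)$ forces $p_B(r_X(x_1))=p_B(r_X(x_2))$ and combining the two conditions. Your proposed substitute --- transferring an inverse system through $p(X)$ with $Y_{\alpha+1}$ the closure of a graph in $Y_\alpha\times M_\alpha$ --- does not come with any argument that the resulting projections are skeletal, and no such argument is automatic; this is the theorem, not a detail one may defer.

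Two further points would break the sketch even if the realization were granted. First, your verification of (L1) rests on the inference ``$\langle r\rangle\subseteq A_\phi$, so $r$ factors through $\psi_\phi$'': containment of regular-open subalgebras of $\operatorname{RO}(Y)$ does not yield factorization of an arbitrary continuous map $r$ through a merely semi-open map $\psi_\phi$ (it does not even force $r$ to be constant on fibers of $\psi_\phi$). The paper gets genuine factorization the only cheap way available: extend $r$ over $\mathbb R^\Gamma$ using the $C$-embedding, use that the extension depends on a small set of coordinates, and enlarge that set to a member of $\Sigma$, so that $p_B\prec r$ holds literally by coordinate dependence. Second, the theorem concerns arbitrary Tychonoff spaces; the kernels in the relevant spectral decomposition are only separable metric, not compact, so your device ``perfect because the kernels are compact, hence closed, hence skeletal $=$ semi-open'' is not available in the stated generality, and in any case semi-openness of the successor bonding maps would still leave open the semi-openness of the limit projections $\psi_\phi\colon Y\to Y_\phi$, which is what (L1)--(L2) require.
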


\begin{proof}
Suppose $Y$ is co-absolute with $X$ and $Z$ is their common absolute. So, there exist
irreducible perfect maps $\theta_X\colon Z\to X$ and $\theta_Y\colon Z\to Y$.  Consider the set-valued maps
$r_X\colon X\to Y$ and $r_Y\colon Y\to X$ defined by
$r_X(x)=\theta_Y(\theta_X^{-1}(x))$, $x\in X$, $r_Y(y)=\theta_X(\theta_Y^{-1}(y))$, $y\in Y$.
Since $\theta_X$ is irreducible, for every open $V\subset Y$ the set $r_X^{\sharp}(V)=\{x\in X:r_X(x)\subset V\}$ is non-empty and open in $X$, and
$r_X^{\sharp}(V)=\theta_X^{\sharp}(\theta_Y^{-1}(V))$, where $\theta_X^{\sharp}(W)=\{x\in X:\theta_X^{-1}(x)\subset W\}$, $W\subset Z$.
 Hence, $r_X$ is upper semi-continuous and compact valued. Similarly, $r_Y$ is also
upper semi-continuous and compact valued.

Next claim follows from the facts that both $\theta_X$ and $\theta_Y$ are closed irreducible maps, and $Z$ is extremally disconnected.

\textit{Claim $1.$ For every open $V\subset Y$ the set $\theta_X^{\sharp}(\overline{\theta_Y^{-1}(V)})$ is regularly open and
$\overline{r_X^{\sharp}(V)}=\overline{r_X^{-1}(V)}=\overline{\theta_X^{\sharp}(\overline{\theta_Y^{-1}(V)})}$.}

Consider the disjoint union $X\bigoplus Y$ of $X$ and $Y$ as a $C$-embedded subspace in $\mathbb R^\Gamma$ for some $\Gamma$.
For every $B\subset\Gamma$ let $\phi_B=\pi_B|X$, $p_B=\pi_B|Y$, $X_B=\phi_B(X)$ and
$Y_B=p_B(Y)$. If $B\subset C\subset\Gamma$, there exists natural maps $\phi^C_B\colon X_C\to X_B$ and $p^C_B\colon Y_C\to Y_B$.
Let $\Psi$ be a multiplicative lattice for $X$ consisting of open maps.
Since $X$ is also
$C$-embedded in $\mathbb R^\Gamma$, the family $\mathcal A=\{B\subset\Gamma: \phi_B\in\Psi\hbox{~}\}$ satisfies
conditions $(i) - (ii)$ from Proposition~\ref{og}.

\textit{Claim $2.$ Let $C\in\mathcal A$ be a set of cardinality $\tau$ and $\{V_\alpha\}$ an open family in $Y_C$ of the same
cardinality. Then there exists $B\in\mathcal A$ containing $C$ and
a family $\{G_\alpha\}$ of open subsets of $\phi_B(X)$ such that $|B|=\tau$ and
$r_X^{\sharp}(p_B^{-1}(V_\alpha))$ is a dense subset of $\phi_B^{-1}(G_\alpha)$ for each $\alpha$.}

Because every disjoint open family in $X$ is at most countable \cite{va1}, any family of open subsets of $X$  contains a dense countable subfamily.
For every $\alpha$ choose a cover $\gamma_\alpha$ of $r_X^{\sharp}(p_B^{-1}(V_\alpha))$
consisting of open subsets of $X$ of the form $U\cap X$, where $U$ is a standard open set in $\mathbb R^\Gamma$. Then there exists a dense countable subfamily $\omega_\alpha$ of $\gamma_\alpha$. Since each element of $\omega_\alpha$ depends on finitely many coordinates, by Proposition~\ref{og}, we can
 find a set $B\in\mathcal A$ containing $C$ of cardinality $|B|=\tau$ such that
$\phi_{B}(W)$ is open in $\phi_{B}(X)$ and $\phi_{B}^{-1}(\phi_{B}(W))=W$ for every $W\in\omega_\alpha$ and every $\alpha$.  Then each $O_\alpha=\phi_B(W_\alpha)$ is open in $\phi_B(X)$ and $\phi_B^{-1}(O_\alpha)=W_\alpha$, where
$W_\alpha=\bigcup\{W:W\in\omega_\alpha\}$. Because $\phi_B$ is open, we have $\phi_B^{-1}(\overline{O_\alpha})=\overline{W_\alpha}=\overline{r_X^{\sharp}(p_B^{-1}(V_\alpha))}$
and  $\phi_B(r_X^{\sharp}(p_B^{-1}(V_\alpha)))\subset \rm{Int}(\overline{O_\alpha})=G_\alpha$. Hence,
$r_X^{\sharp}(p_B^{-1}(V_\alpha))\subset\phi_B^{-1}(G_\alpha)\subset \overline{r_X^{\sharp}(p_B^{-1}(V_\alpha))}$ for every $\alpha$.
This completes the proof of Claim 2.

 For any $B\subset\Gamma$  let $\Omega_B$ and
$\Lambda_B$ be bases for $X_B$ and $Y_B$, respectively,  having cardinality $\leq |B|$ such that $\Omega_B$ consists of open sets of the form
 $U\cap X_B$ and $\Lambda_B$ consists of all finite unions of sets of the form $V\cap Y_B$, where $U$ and $V$ are standard open sets in $\mathbb R^{B}$.
Denote by $\Sigma$ the family of all $B\subset\Gamma$ satisfying the following conditions:
\begin{itemize}
\item[(1)] $B\in\mathcal A$;
\item[(2)] For every $V\in\Lambda_B$ there exists an open set $G_V\subset\phi_B(X)$ such that
$r_X^{\sharp}(p_B^{-1}(V))$ is a dense subset of $\phi_B^{-1}(G_V)$;
\item[(3)] For any $U\in\Omega_B$ there exists an open set $V_U\subset Y_B$ with
$p_B^{-1}(V_U)\subset r_Y^{\sharp}(\phi_B^{-1}(U))$.
\end{itemize}

\textit{Claim $3.$ $\Sigma$ is additive.}

First, let us show that $\Sigma$ is finitely additive. It suffices to prove that if $B(1),B(2)$ belong to $\Sigma$, then
so is $B=B(1)\cup B(2)$. Because $\mathcal A$ is additive, we need to check that $B$ satisfies conditions $(2)$ and $(3)$.

Suppose $V\subset Y_B$ is open and $V=V_1\cup V_2\cup...\cup V_m$,
 where each $V_i$ is of the form $W_i\cap Y_B$ with $W_i$ being a standard open subset of $\mathbb R^{B}$. So, for every $i$ we have
 $W_i=\prod\{W_i(\alpha):\alpha\in B\}$ such that all $W_i(\alpha)$, $\alpha\in B$, are open intervals and the set
 $k(V_i)=\{\alpha: W_i(\alpha)\neq\mathbb R\}$ is finite. Then the family $\{V_1,...,V_m\}$ is the union of the following families:
 $\gamma_j=\{V_i: k(V_i)\subset B(j)\}$, $j=1,2$, and $\gamma_{1,2}=\{V_i: k(V_i)\backslash B(j)\neq\varnothing, j=1,2\}$.
 Let $O_j^*=\bigcup\{p^B_{B(j)}(V_i):V_i\in\gamma_j\}$ and $O_j=\bigcup\{V_i:V_i\in\gamma_j\}$, $j=1,2$. Then, according to (2),
 $r_X^{\sharp}(p_{B(j)}^{-1}(O_j^*))$ is a dense subset of $\phi_{B(j)}^{-1}(G_j^*)$ for some open $G_j^*\subset X_{B(j)}$, $j=1,2$.
 Since $p_{B(j)}^{-1}(O_j^*)=p_{B}^{-1}(O_j)$, for every $j=1,2$ we have
 $$r_X^{\sharp}(p_B^{-1}(O_j))\hbox{~}\mbox{is dense in}\hbox{~}\phi_{B}^{-1}(G_j)\hbox{~}\mbox{with}\hbox{~} G_j=(\phi^B_{B(j)})^{-1}(G_j^*). \leqno{(4)}$$
If $V_i\in\gamma_{1,2}$, then $V_i=V_i(1)\cap V_i(2)$ with $V_i(j)\in\gamma_j$, $j=1,2$. Hence, for each $j$ there exists an open set $G_{V_i(j)}\subset X_{B(j)}$ such that
$\phi_{B(j)}^{-1}(G_{V_i(j)})$ contains $r_X^{\sharp}(p_{B(j)}^{-1}(V_i(j)^*)$ as a dense subset, where $V_i(j)^*=p^B_{B(j)}(V_i(j))$. Because
$p_B^{-1}(V_i)=p_{B(1)}^{-1}(V_i(1)^*)\cap p_{B(2)}^{-1}(V_i(2)^*)$, the set $r_X^{\sharp}(p_{B}^{-1}(V_i))$ (being the intersection of the open sets  $r_X^{\sharp}(p_{B(1)}^{-1}(V_i(1)^*))$ and $r_X^{\sharp}(p_{B(2)}^{-1}(V_i(2)^*))$) is dense in $\phi_B^{-1}(G_{V_i})$, where
$G_{V_i}=(\phi^B_{B(1)})^{-1}(G_{V_i(1)})\cap (\phi^B_{B(2)})^{-1}(G_{V_i(2)})$. Therefore, we have finitely many open sets $G_W\subset X_B$ such that $$r_X^{\sharp}(p_{B}^{-1}(W))\hbox{~}\mbox{is dense in}\hbox{~}\phi_B^{-1}(G_W), W\in\gamma_{1,2}. \leqno{(5)}$$

Obviously, $V=\bigcup\{W:W\in\gamma_{1,2}\}\cup O_1\cup O_2$, and let $G(V)=\bigcup\{G_W:W\in\gamma_{1,2}\}\cup G_1\cup G_2$. It follows from $(4)$ and (5) that the set
$$L=\bigcup\{r_X^{\sharp}(p_{B}^{-1}(W)):W\in\gamma_{1,2}\}\cup r_X^{\sharp}(p_B^{-1}(O_1))\cup r_X^{\sharp}(p_B^{-1}(O_2))$$ is
dense in $\phi_B^{-1}(G(V))$. On the other hand, by Claim 1,
$$\overline{L}=\bigcup\{\overline{r_X^{-1}(p_{B}^{-1}(W))}:W\in\gamma_{1,2}\}\cup\overline{r_X^{-1}(p_B^{-1}(O_1))}\cup\overline{ r_X^{-1}(p_B^{-1}(O_2))}.$$ Therefore, $\overline{L}=\overline{r_X^{-1}(p_B^{-1}(V))}=\overline{\phi_B^{-1}(G(V))}$. Let
$G_V=\mathrm{Int}(\overline{G(V)})$. Because $\phi_B$ is an open map, $\phi_B^{-1}(G_V)=\mathrm{Int}(\overline{r_X^{-1}(p_B^{-1}(V))})$. Finally,
since $r_X^{\sharp}(p_{B}^{-1}(V))$ is open in $X$ and it is dense in $\overline{r_X^{-1}(p_B^{-1}(V))}$ (see Claim 1),
$r_X^{\sharp}(p_{B}^{-1}(V))$ is a dense subset of $\phi_B^{-1}(G_V)$. Thus, $B$ satisfies condition $(2)$.

To show that $B$ satisfies condition $(3)$, let $U\in\Omega_B$. Then $U=\phi^B_{B(1)}(U(1))\cap\phi^B_{B(2)}(U(2))$ with $U(i)\in\Omega_{B(i)}$, $i=1,2$. So, there are
open sets $V_{U(i)}\subset Y_{B(i)}$ such that
$p_{B(i)}^{-1}(V_{U(i)})\subset r_Y^{\sharp}(\phi_{B(i)}^{-1}(U(i)))$, $i=1,2$. Let
$V_U=(p^B_{B(1)})^{-1}(V_{U(1)})\cap (p^B_{B(2)})^{-1}(V_{U(2)})$. Then, $p_B^{-1}(V_U)=p_{B(1)}^{-1}(V_{U(1)})\cap p_{B(2)}^{-1}(V_{U(2)})$
and $\phi_B^{-1}(U)=\phi_{B(1)}^{-1}(U(1))\cap\phi_{B(2)}^{-1}(U(2))$. Consequently,
$p_{B}^{-1}(V_{U})\subset r_Y^{\sharp}(\phi_{B}^{-1}(U))$.
Hence, $B\in\Sigma$.

Suppose now that $B=\bigcup B_\alpha$ is the union of infinitely many $B_\alpha\in\Sigma$, and let $V\in\Lambda_B$.
Then there exists a set $C\subset B$ which is a union of  finitely many $\alpha_i$, $i=1,..,k$, such that
$(p^B_C)^{-1}(p^B_C(V))=V$ and $V^*=p^B_C(V)\in\Lambda_C$. Since $C\in\Sigma$, there exists an
open set $G_V^*$ in $X_C$ with $r_X^{\sharp}(p_C^{-1}(V^*))$ being a dense subset of $\phi_C^{-1}(G_V^*)$. Obviously,
$p_C^{-1}(V^*)=p_B^{-1}(V)$ and $\phi_C^{-1}(G_V^*)=\phi_B^{-1}(G_V)$, where $G_V=(\phi_C^B)^{-1}(G_V^*)$. Consequently,
$r_X^{\sharp}(p_B^{-1}(V))$ is a dense subset of $\phi_B^{-1}(G_V)$. Hence, $B$ satisfies $(2)$. Similarly, one can show that
$B$ satisfies also condition $(3)$. Therefore, $B\in\Sigma$. This complete the proof of Claim 3.

\textit{Claim $4.$ Every infinite $C\in\mathcal A$ of cardinality $\tau$ is contained in a set $B\in\Sigma$ with $|B|=\tau$.}

We construct by induction sets $B(k)\in\mathcal A$, open subsets $\{G_V:V\in\Lambda_{B(2k)}\}$ of $\phi_{B(2k+1)}(X)$ and open subsets $\{V_U:U\in\Omega_{B(2k+1)}\}$ of $Y_{B(2k+2)}$  such that for every $k\geq 0$ we have:
\begin{itemize}
\item[(i)] $B(0)=C$, $|B(k)|=\tau$ and  $B(k)\subset B(k+1)$;
\item[(ii)] $r_X^{\sharp}(p_{B(2k)}^{-1}(V))$ is a dense subset of $\phi_{B(k+1)}^{-1}(G_V)$ for every
$V\in\Lambda_{B(2k)}$;
\item[(iii)] $p_{B(2k+2)}^{-1}(V_U)\subset r_Y^{\sharp}(\phi_{B(2k+1)}^{-1}(U))$ for every $U\in\Omega_{B(2k+1)}$.
\end{itemize}

Suppose the construction is done up to level $2k$. Since the family $\{p_{B(2k)}^{-1}(V):V\in\Lambda_{B(2k)}\}$ is of cardinality $\leq\tau$, by Claim 2, there
exists a set $B(2k+1)\in\mathcal A$ of cardinality $\tau$ containing $B(2k)$ and open sets $\{G_V:V\in\Lambda_{B(2k)}\}$ in $\phi_{B(k+1)}(X)$ satisfying item $(ii)$.
Because each $r_Y^{\sharp}(\phi_{B(2k+1)}^{-1}(U))$, $U\in\Omega_{B(2k+1)}$, is open in $Y$, we can find a set $B(2k+2)\in\mathcal A$ containing $B(2k+1)$ such that $|B(k+2)|=\tau$ and
$p_{B(2k+2)}(Y)$ contains an open family $\{V_U:U\in\Omega_{B(2k+1)}\}$ satisfying item $(iii)$. This complete the inductive step.
Obviously,
$B=\bigcup_{k=1}^{\infty}B(k)\in\mathcal A$ and $|B|=\tau$. Repeating the arguments from the proof of Claim 3, one can show that $B$ satisfies conditions $(2)$ and $(3)$. So, $B\in\Sigma$, which completes the proof of Claim 4.

\textit{Claim $5.$ For any $B\in\Sigma$ the map $p_B\colon Y\to Y_B$ is semi-open.}

First, let us show that if $B\in\Sigma$ and $\phi_B(x_1)=\phi_B(x_2)$ for some $x_1, x_2\in X$, then $p_B(r_X(x_1))=p_B(r_X(x_2))$. Indeed,
suppose $p_B(r_X(x_2))\subset V$, where $V\subset Y_B$ is open. Since $p_B(r_X(x_2))$ is compact and $\Lambda_B$ is finitely additive, we can
assume that $V\in\Lambda_B$. Then $x_2\in r_X^{\sharp}(p_B^{-1}(V))$. By condition $(2)$, $r_X^{\sharp}(p_B^{-1}(V))$ is a dense subset of
$\phi_B^{-1}(G_V)$ for some open set $G_V$ in $X_B$, so
$x_1\in\phi_B^{-1}(x_2)\subset\phi_B^{-1}(G_V)$. On the other hand, according to Claim 1, we have
$\theta_X^{\sharp}(\overline{\theta_Y^{-1}(p_B^{-1}(V))})=\mathrm{Int}(\overline{r_X^{\sharp}(p_B^{-1}(V))})$. Hence,
$\theta_X^{\sharp}(\overline{\theta_Y^{-1}(p_B^{-1}(V))})$ contains the set $\phi_B^{-1}(G_V)$. Thus,
$r_X(x_1)\subset\theta_Y(\overline{\theta_Y^{-1}(p_B^{-1}(V))})=\overline{p_B^{-1}(V)}$. Finally, we obtain $p_B(r_X(x_1))\subset\overline{V}$,
which implies $p_B(r_X(x_1))\subset p_B(r_X(x_2))$. Similarly, $p_B(r_X(x_2))\subset p_B(r_X(x_1))$.

To show that $p_B$ is semi-open, let $W\subset Y$ be open. Then $\phi_B(r_X^{\sharp}(W))$ is open in $\phi_B(X)$. According to $(3)$, there exists $U\in\Omega_B$ and open $V_U\subset Y_B$ with
$U\subset\phi_B(r_X^{\sharp}(W))$ and $p_B^{-1}(V_U)\subset r_Y^{\sharp}(\phi_B^{-1}(U))$. The last inclusion implies $V_U\subset p_B(r_Y^{\sharp}(\phi_B^{-1}(U)))$. It is easily seen that $r_Y^{\sharp}(\phi_B^{-1}(U))\subset r_X(\phi_B^{-1}(U))$. So, we have the inclusions
$$V_U\subset p_B(r_Y^{\sharp}(\phi_B^{-1}(U)))\subset p_B(r_X(\phi_B^{-1}(U))).$$  Therefore, it suffices to show that
$p_B(r_X(\phi_B^{-1}(U)))$ is contained in $p_B(W)$. To this end, let $x\in\phi_B^{-1}(U)$. Then there exists $y\in r_X^{\sharp}(W)$ with $\phi_B(x)=\phi_B(y)$. Hence, $p_B(r_X(x))=p_B(r_X(y))\subset p_B(W)$. Thus,  $p_B(r_X(\phi_B^{-1}(U)))\subset p_B(W)$, which completes the
proof of Claim 5.

We can show now that $Y$ has a multiplicative lattice of semi-open maps. Indeed, let $\Psi_Y=\{p_B:B\in\Sigma\}$. According to the last claim,
$\Psi_Y$ consists of semi-open maps. If $\{p_{B(\alpha)}\}\subset\Psi_Y$, then it is easily seen that $\triangle p_{B(\alpha)}=p_B$, where
$B$ is the union of all $B(\alpha)$. Hence, $\triangle p_{B(\alpha)}\in\Psi_Y$. Finally, let $g\colon Y\to g(Y)$ be an arbitrary map with
$w(g(Y))=\tau$. Considering $g(Y)$ as a subspace of $\mathbb R^\tau$, we can extend $g$ to a map $h\colon\mathbb R^\Gamma\to\mathbb R^\tau$ (recall that $Y$ is $C$-embedded in $\mathbb R^\Gamma$). Then there exists a set $B\in\Sigma$ of cardinality $\tau$ such that $\pi_B\prec h$. Consequently,
$p_B\prec g$ and $w(p_B(Y))\leq\tau$.
\end{proof}

\begin{cor}
Every space $Y$ co-absolute with a space possessing a multiplicative lattice of open maps is skeletally Dugundji.
In particular, if $Y$ is co-absolute to an $AE(0)$-space, then $Y$ has a multiplicative lattice of semi-open maps and $Y$ is skeletally Dugundji.
\end{cor}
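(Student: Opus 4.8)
The plan is to deduce the corollary from the preceding theorem together with the spectral/definitional characterization of skeletally Dugundji spaces given in the introduction. First I would recall that the theorem just proved says: if $X$ has a multiplicative lattice of open maps, then every $Y$ co-absolute with $X$ has a multiplicative lattice $\Psi_Y=\{p_B:B\in\Sigma\}$ of semi-open maps, where $\Sigma$ is the additive, cofinal (in the sense of Claim 4) family of subsets of $\Gamma$ produced there. So the only real content left in the corollary is to pass from ``$Y$ has a multiplicative lattice of semi-open maps'' to ``$Y$ is skeletally Dugundji,'' and then to observe that $AE(0)$-spaces are a special case because, as already noted in Section 2, every $AE(0)$-space possesses a multiplicative lattice of open maps.

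The passage from a multiplicative lattice of (semi-)open maps to the spectral description of skeletally Dugundji spaces is the standard Shchepin-style construction: using (L1) choose a cofinal-by-countable-weight well-ordered sequence of members $\phi_{B(\alpha)}$ of $\Psi_Y$ so that $w(\phi_{B(\alpha)}(Y))$ grows by countable steps, take $X_\alpha=\phi_{B(\alpha)}(Y)$ with the natural bonding maps $p^\beta_\alpha=p^{B(\beta)}_{B(\alpha)}$, and arrange (by inserting, if necessary, intermediate members via (L1) and using additivity of $\Sigma$ exactly as in Proposition~\ref{og}(ii)) that each one-step extension $B(\alpha+1)$ is obtained from $B(\alpha)$ by adjoining countably many coordinates. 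Then $p^{\alpha+1}_\alpha$ is (up to homeomorphism) the restriction to $X_{\alpha+1}\subset X_\alpha\times\mathbb R^{\omega}$ of the projection, i.e.\ it has a metrizable (indeed separable metric) kernel; multiplicativity (L2) of $\Psi_Y$ gives that at limit stages $X_\gamma$ is dense in $\varprojlim\{X_\alpha:\alpha<\gamma\}$ (the diagonal of the relevant subfamily is again in $\Psi_Y$); condition (L1) applied to bounded continuous functions on $\varprojlim S$ — equivalently, on $Y$ since $Y$ embeds densely there — yields the factorizing condition (iv); and $X_0$ can be taken separable metric. The bonding maps $p^\beta_\alpha$ are skeletal because each $p_B$ is semi-open, hence skeletal, and compositions/restrictions of the limit projections of semi-open maps in a multiplicative lattice remain skeletal. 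This is exactly the structure required in the definition, so $Y$ is skeletally Dugundji.

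For the second sentence of the corollary, I would simply invoke the remark made right before Proposition~\ref{og}: by \cite[Theorem 1]{v2} and \cite[Lemmas 3--6]{v2}, every $AE(0)$-space $X$ has a multiplicative lattice of open maps (the restrictions $\pi_B|X$ with $B$ $r$-admissible). Hence if $Y$ is co-absolute to an $AE(0)$-space, the theorem applies and gives $Y$ a multiplicative lattice of semi-open maps, and then the first part of the corollary gives that $Y$ is skeletally Dugundji.

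The main obstacle is the bookkeeping in the spectral construction: one must interleave three kinds of steps — enlarging $B(\alpha)$ by countably many coordinates to guarantee metrizable kernels (Proposition~\ref{og}(ii)), enlarging to stay inside $\Sigma$ and keep (L1)-smallness of weights, and taking diagonal products at limits — while verifying that the limit of the resulting system is $Y$ (up to the almost-limit/dense-embedding clause) and that condition (iv) holds for \emph{all} bounded continuous functions on $\varprojlim S$, not merely those factoring through some $\phi_{B(\alpha)}$ a priori. All of these are routine given the additivity and cofinality of $\Sigma$ and property (L1), so the corollary follows; a short proof just chains these observations together.
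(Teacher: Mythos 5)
Your proposal is correct and follows essentially the paper's route: apply Theorem 2.2 (with the remark that every $AE(0)$-space has a multiplicative lattice of open maps) and then convert the resulting lattice into a well-ordered factorizing inverse system with skeletal bonding maps and metrizable one-step kernels, using the additivity and cofinality of $\Sigma$. The only cosmetic difference is that you phrase the second step as the general implication ``a multiplicative lattice of skeletal maps yields skeletally Dugundji'' (which the paper records later as Proposition 3.1(i)), whereas the paper's proof of the corollary performs the same construction directly, indexing by ordinals $\alpha<\omega(\tau)$ and taking increasing unions of countable members $A(\alpha)\in\Sigma$.
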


\begin{proof}
We are going to show that $Y=\mathrm{a}-\displaystyle\lim_\leftarrow S$, where  $\displaystyle S=\{Y_\alpha, p^{\beta}_\alpha, \alpha<\beta<\tau\}$ is a factorizing inverse system with skeletal maps such that $Y_0$ is a separable metric space  and $p^{\alpha+1}_\alpha$ has a metrizable kernel for each $\alpha$.
Let $\mathbb R^\Gamma$ and $\Sigma$ be as in the proof of Theorem 2.2. There is nothing to prove if $Y$ is second countable. So, let $w(Y)>\aleph_0$ and assume that $\Gamma$ is the set of all ordinals $\alpha<\omega(\tau)$.
For every $\alpha\in\Gamma$ there exists  a countable set $A(\alpha)\in\Sigma$ containing $\alpha$. Define $B(0)=A(0)$,
$B(\alpha)=\bigcup\{A(\beta):\beta<\alpha\}$ if $\alpha$ is a limit ordinal, and $B(\alpha)=\bigcup\{A(\beta):\beta\leq\alpha\}$ if $\alpha$ is isolated.
Then the inverse system $S=\{Y_{B(\alpha)}, p^{B(\beta)}_{B(\alpha)}, \alpha<\beta<\omega(\tau)\}$ consists of skeletal maps and  $p^{B(\alpha+1)}_{B(\alpha)}$ has a metrizable kernel for all $\alpha$.
Moreover, $Y$ as a dense subset of $\displaystyle\lim_\leftarrow
S$ with $Y=\mathrm{a}-\displaystyle\lim_\leftarrow S$.

The second part of the corollary follows from the fact that each $AE(0)$-space has a multiplicative
lattice of open maps.
\end{proof}

\begin{cor}
Let $Y$ be a Lindel\"{o}f $p$-space co-absolute  with a space possessing a multiplicative lattice of open maps. Then $Y$ has a multiplicative lattice
of perfect skeletal maps.
\end{cor}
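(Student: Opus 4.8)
The plan is to reuse the family $\Sigma$ and the maps $\{p_B:B\in\Sigma\}$ constructed in the proof of Theorem~2.2 and to retain only those indices $B$ for which $p_B$ turns out to be perfect; the main point will be that enough such $B$ are available and that they still form a multiplicative lattice.

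First I would recall that a Tychonoff space is a Lindel\"of $p$-space exactly when it admits a perfect map onto a separable metrizable space, and fix such a perfect surjection $f\colon Y\to M$ with $w(M)\le\aleph_0$. Applying the construction in the last paragraph of the proof of Theorem~2.2 to $f$ (in the role of the arbitrary map $g$ there, with $\tau=w(M)\le\aleph_0$) produces a \emph{countable} $B_0\in\Sigma$ with $p_{B_0}\prec f$; fix a continuous $q_0\colon Y_{B_0}\to M$ with $f=q_0\circ p_{B_0}$.

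Next I would set $\Sigma'=\{B\in\Sigma: B_0\subset B\}$ and $\Psi_Y'=\{p_B:B\in\Sigma'\}$ and check that $\Psi_Y'$ is a multiplicative lattice of semi-open maps. Additivity and multiplicativity are inherited from $\Sigma$: a union of members of $\Sigma'$ lies in $\Sigma$ by Claim~3 and contains $B_0$, hence lies in $\Sigma'$, and $\triangle_\alpha p_{B(\alpha)}=p_{\bigcup_\alpha B(\alpha)}$ exactly as in the proof of Theorem~2.2. Condition (L1) is verified as there; the only adjustment is that, when the given map of weight $\tau$ is extended over $\mathbb R^\Gamma$, one uses Claim~4 to pick $B\in\Sigma$ of cardinality $\tau$ containing $B_0$ together with the (at most $\tau$ many) coordinates of the extension --- adjoining the countable $B_0$ does not increase the cardinality --- so that $B\in\Sigma'$. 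Each $p_B$ with $B\in\Sigma'$ is semi-open by Claim~5, hence skeletal.

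Finally I would verify that every $p_B$ with $B\in\Sigma'$ is perfect. Since $B_0\subset B$ we have $p_{B_0}=p^B_{B_0}\circ p_B$, hence $f=(q_0\circ p^B_{B_0})\circ p_B$; that is, the perfect map $f$ factors through the surjection $p_B\colon Y\to Y_B$, whose target $Y_B\subset\mathbb R^B$ is Hausdorff. By the standard fact that a continuous map into a Hausdorff space which becomes perfect after post-composition with a continuous map is itself perfect (Engelking, \emph{General Topology}, Section 3.7; one may also argue directly that $\beta p_B(\beta Y\setminus Y)\subset\beta Y_B\setminus Y_B$ using the analogous property of $f$), $p_B$ is perfect. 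Hence $\Psi_Y'$ is a multiplicative lattice consisting of perfect skeletal maps, which is what was claimed. I expect the only genuinely delicate points to be securing the \emph{countable} $B_0$ (so that fixing it does not disturb the cardinality clause of (L1)) and the ``factoring through a perfect map'' step; everything else is copied verbatim from the proof of Theorem~2.2.
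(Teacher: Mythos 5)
Your argument is correct, but it takes a genuinely different route from the paper's. The paper exploits the Lindel\"of $p$-property structurally: since $X$ is also a Lindel\"of $p$-space (the property passes through the common absolute via perfect preimages and images), both $X$ and $Y$ sit as closed subspaces of a product $M\times\mathbb I^\Gamma$ with $M$ second countable, and Proposition 2.1 and Theorem 2.2 are simply re-run with this product in place of $\mathbb R^\Gamma$; every restricted projection $p_B=\pi_B|Y$ is then automatically perfect, because $\pi_B\colon M\times\mathbb I^\Gamma\to M\times\mathbb I^B$ has compact kernel and $Y$ is closed in the ambient product. You instead keep the original construction over $\mathbb R^\Gamma$ as a black box, anchor the lattice at a countable $B_0\in\Sigma$ through which a fixed perfect map $f\colon Y\to M$ factors (this $B_0$ does exist, by Proposition 2.1(ii) and Claim 4 applied to the countably many coordinates of an extension of $f$ over $\mathbb R^\Gamma$), and pass to $\Sigma'=\{B\in\Sigma: B_0\subset B\}$; perfectness of each $p_B$ then follows from the standard fact that if $f=h\circ p_B$ is perfect with $h$ continuous and $Y_B$ Hausdorff, then $p_B$ is perfect (your $\beta$-remainder argument, or a routine net argument, proves this). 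Your route saves re-checking that the machinery of Proposition 2.1 and Theorem 2.2 transfers to the new ambient product (the paper's version silently uses, e.g., that $M\times\mathbb I^\Gamma$ is paracompact so closed subsets are $C$-embedded), at the price of the anchoring trick and the factorization lemma; the paper's route gets perfectness for free and incidentally shows the lattice maps on $X$ are perfect and open as well. One small caveat in your version: the cardinality clause of (L1) survives the restriction to $\Sigma'$ only under the usual convention that weights are infinite, so that adjoining the countable $B_0$ costs nothing --- but this is the same convention already implicit in Claim 4 and in the last paragraph of the proof of Theorem 2.2.
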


\begin{proof}
Suppose $Y$ is co-absolute with a space $X$ possessing a multiplicative lattice of open maps.
Recall that a Lindel\"{o}f $p$-space \cite{a} is a space admitting a perfect map onto a separable metric space, and that this property is invariant under images and preimages of perfect maps. So, $X$ is also a a Lindel\"{o}f $p$-space. Then $X$ and $Y$ can be embedded as closed subsets of a product of the form
$M\times\mathbb I^\Gamma$ for some $\Gamma$, where $M$ is a second countable space. Considering in Proposition 2.1 such a product instead of $\mathbb R^\Gamma$, we obtain a family $\mathcal A=\{B\subset\Gamma: \phi_B\in\Psi\hbox{~}\}$, where $\phi_B=\pi_B|X\colon X\to\pi_B(X)$ is the restriction of the projection $\pi_B:M\times\mathbb I^\Gamma\to M\times\mathbb I^B$. Hence, all $\phi_B$, $B\in\mathcal A$ are perfect and open maps. Similarly, replacing $\mathbb R^\Gamma$ in Theorem 2.2 with $M\times\mathbb I^\Gamma$, we obtain the family $\Sigma\subset\mathcal A$ and that the perfect skeletal maps $p_B=\pi_B|Y:Y\to\pi_B(Y)$, $B\in\Sigma$, form a multiplicative lattice.
\end{proof}

\section{Skeletally Dugundji spaces}

Next proposition provides examples of skeletally Dugundji spaces.
\begin{pro} A space $X$ is skeletally Dugundji if it satisfies one of the following conditions:
\begin{itemize}
\item[(i)] $X$ has a multiplicative lattice of skeletal maps;
\item[(ii)] $X$ is strongly $\pi$-regularly  $C^*$-embedded subset of a space with a multiplicative lattice of open maps.
\end{itemize}
\end{pro}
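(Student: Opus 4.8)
The plan is to prove the two implications separately, both by producing a factorizing inverse system with surjective skeletal bonding maps having metrizable kernels, so that $X$ becomes the almost limit of the system.

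For part (i), suppose $X$ has a multiplicative lattice $\Psi$ of skeletal maps. Following the standard approach (as in Corollary 2.3), I would first $C^*$-embed $X$ in a Tychonoff cube $\mathbb I^\Gamma$, and consider the family $\mathcal A=\{B\subset\Gamma:\phi_B\in\Psi\}$ where $\phi_B=\pi_B|X$. The analogue of Proposition~\ref{og} should give that $\mathcal A$ is additive and cofinal in the sense that every $C\subset\Gamma$ lies in some $B\in\mathcal A$ with $|B|\leq|C|\cdot\aleph_0$; the proof carries over verbatim since it only used that diagonal products stay in $\Psi$ and the $C^*$-factorization property of maps into $\mathbb R^\tau$, which is exactly condition (L1). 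One has to be slightly careful: in Proposition~\ref{og} the lattice was assumed to consist of quotient maps in order to conclude $\phi_B=\triangle\phi_{B_\alpha}$; for a general multiplicative lattice this diagonal identity still holds because both sides are restrictions of $\pi_B$, so additivity is fine. Then, as in Corollary 2.3, when $w(X)=\tau>\aleph_0$ and $\Gamma=\omega(\tau)$, I would choose for each $\alpha$ a countable $A(\alpha)\in\mathcal A$ containing $\alpha$, close up along initial segments to get an increasing continuous chain $B(\alpha)$, $\alpha<\omega(\tau)$, with $B(\alpha+1)\setminus B(\alpha)$ countable, and set $X_\alpha=\phi_{B(\alpha)}(X)$. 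The bonding maps $p^{\beta}_\alpha\colon X_\beta\to X_\alpha$ are restrictions of coordinate projections between cubes; since $\phi_{B(\alpha)}=p^{\beta}_\alpha\circ\phi_{B(\beta)}$ and $\phi_{B(\beta)}$ is surjective, skeletality of $\phi_{B(\alpha)}$ forces skeletality of $p^{\beta}_\alpha$ (if $p^\beta_\alpha$ collapsed an open set $U$ to a set with empty interior, pulling back by the surjection $\phi_{B(\beta)}$ would contradict skeletality of $\phi_{B(\alpha)}$). The successor maps have metrizable kernels because $B(\alpha+1)\setminus B(\alpha)$ is countable, and the limit stages are handled by continuity of the chain; the factorizing condition (iv) follows from the cofinality in $\mathcal A$ together with (L1). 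Hence $X=\mathrm{a}\text{-}\varprojlim S$.

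For part (ii), suppose $X$ is strongly $\pi$-regularly $C^*$-embedded in a space $W$ that has a multiplicative lattice $\Psi$ of open maps, with operator $\reg\colon\Tee_X\to\Tee_W$ satisfying (e1) and (e2). The idea is to transport the lattice structure from $W$ to $X$ along $\reg$, producing a multiplicative lattice of skeletal maps on $X$, and then quote part (i). Embed $W$ (hence $X$) in some $\mathbb I^\Gamma$, let $\mathcal A_W=\{B:\psi_B\in\Psi\}$ where $\psi_B=\pi_B|W$, and for $B\in\mathcal A_W$ consider $\phi_B=\pi_B|X$. One shows that for a cofinal additive subfamily $\Sigma\subset\mathcal A_W$ the maps $\phi_B$ are skeletal: given open $U\cap X$ in $X$, apply $\reg$ to get $\reg(U\cap X)$ open in $W$ meeting $X$ densely in $U$; since $\psi_B$ is open, $\psi_B(\reg(U\cap X))$ has nonempty interior in $\psi_B(W)$, and one needs $B$ large enough that this interior meets $\phi_B(X)$ and in fact witnesses $\int\cl_{\phi_B(X)}\phi_B(U\cap X)\neq\varnothing$. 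The selection of $\Sigma$ is an inductive closing-up argument entirely parallel to the construction of $\Sigma$ in the proof of Theorem~2.2 (Claims 3 and 4), replacing the set-valued correspondences $r_X,r_Y$ by the single operator $\reg$; property (e2) is what makes the family $\Sigma$ additive, and (e1) is what yields density/skeletality. That $\Psi_X=\{\phi_B:B\in\Sigma\}$ is multiplicative is immediate since diagonal products of restrictions of projections are again such restrictions, and (L1) follows from $C^*$-embedding plus cofinality of $\Sigma$. So $X$ has a multiplicative lattice of skeletal maps, and part (i) applies.

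The main obstacle I anticipate is the verification, in part (ii), that the operator $\reg$ genuinely forces skeletality of $\phi_B$ for suitably chosen $B$ — i.e., pinning down exactly which coordinates must go into $B$ so that the open set $\int_{\psi_B(W)}\cl\,\psi_B(\reg(U))$ restricts to a set witnessing $\int_{\phi_B(X)}\cl_{\phi_B(X)}\phi_B(U)\neq\varnothing$. This is where the interplay between the density clause (e1), openness of $\psi_B$, and the density of $X$ in the relevant piece of $W$ has to be managed carefully, and it is the analogue of Claim~5 in Theorem~2.2. Everything else is routine transcription of the machinery already developed in Section~2 and the reduction in Corollary~2.3.
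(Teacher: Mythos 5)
Your part (i) is essentially correct but follows a different, heavier route than the paper. Instead of embedding $X$ in a cube and redoing the Proposition 2.1/Corollary 2.3 machinery, the paper's proof simply enumerates $C(X)=\{f_\alpha:\alpha<\omega(\tau)\}$, uses (L1) to choose $\phi_{f_\alpha}\in\Psi$ with $\phi_{f_\alpha}\prec f_\alpha$ and second countable image, and lets $\varphi_\alpha$ be the diagonal product of the $\phi_{f_\beta}$, $\beta\leq\alpha$; multiplicativity keeps every $\varphi_\alpha$ in $\Psi$, skeletality of the bonding maps and the metrizable kernels come out exactly as you argue, and the factorization condition is automatic because each $f_\alpha$ is caught at its own index. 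Your version has one unaddressed wrinkle: when $\tau$ has countable cofinality, the countable set of coordinates supporting the extension of a given bounded function need not lie in any single $B(\alpha)$ of your increasing chain, so ``cofinality of $\mathcal A$'' alone does not yield condition (iv); this is repaired either by the paper's direct enumeration or by using the evaluation embedding into $\mathbb I^{C^*(X)}$ so that every bounded function on $X$ is literally a coordinate.

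The genuine gap is in part (ii), and it is precisely the step you yourself flag as the ``main obstacle'': you never formulate, let alone verify, the condition on the chosen maps that forces their restrictions to $X$ to be skeletal. Openness of $\psi_B$ together with (e1) is not enough: $\psi_B(\reg(U))$ is indeed open in $\psi_B(W)$ and meets $\phi_B(X)$ in a set containing a dense subset of $\phi_B(U)$, but nothing so far prevents that open set from also containing points of $\phi_B(X)$ outside $\overline{\phi_B(U)}$, which is exactly what must be excluded to get $\mathrm{Int}\,\overline{\phi_B(U)}\neq\varnothing$. The paper supplies the missing idea: call $\phi\in\Phi$ $\reg$-admissible if $\phi^{-1}\bigl(\phi\bigl(\overline{\reg(\phi^{-1}(U)\cap X)}\bigr)\bigr)=\overline{\reg(\phi^{-1}(U)\cap X)}$ for all $U$ in a fixed base of $\phi(M)$, i.e.\ the closures of the sets $\reg(\phi^{-1}(U)\cap X)$ are $\phi$-saturated. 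With this, skeletality of $\phi|X$ (Claim 6) is proved by a contradiction argument that uses the saturation together with \emph{both} (e1) and (e2) --- (e2) is what converts $\reg(U_1)\cap\reg(U)\neq\varnothing$ into $U_1\cap U\neq\varnothing$, so your division of labor ``(e2) gives additivity, (e1) gives skeletality'' is off; admissibility is preserved by diagonal products (Claim 7); and the (L1)-cofinality of admissible maps (Claim 8) requires an $\omega$-step closing-up which at each step invokes a lemma from \cite{vv} producing, for a single open set, a countable-weight member of $\Phi$ with the required saturation. None of this is a routine transcription of Claims 3--5 of Theorem 2.2, which concern the correspondences $r_X,r_Y$ coming from a common absolute and contain no analogue of the saturation identity; so, as written, your part (ii) asserts its key lemma rather than proving it.
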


\begin{proof}
Suppose $X$ has a multiplicative lattice $\Psi$ of skeletal maps. For every continuous function $f\in C(X)$  fix a map $\phi_f\in\Psi$ such that $\phi_f\prec f$ and $\phi_f(X)$ is second countable. We assume that $C(X)=\{f_\alpha:\alpha<\omega(\tau)\}$ for some cardinal $\tau$. Let
$\varphi_0=\phi_{f_0}$, $\varphi_\alpha$ is the diagonal product $\triangle_{\beta<\alpha}\phi_{f_\beta}$ if $\alpha$ is a limit ordinal, and
$\varphi_\alpha=\triangle_{\beta\leq\alpha}\phi_{f_\beta}$ if $\alpha$ is isolated. Define $X_\alpha=\varphi_\alpha(X)$ and
$\varphi_{\beta}^\alpha:X_\alpha\to X_\beta$, $\beta<\alpha$, to be the natural projection. Since all $\varphi_\alpha\in\Psi$, the inverse system
$\displaystyle S=\{X_\alpha, \varphi_{\beta}^\alpha: \beta<\alpha<\omega(\tau)\}$ consists of skeletal maps and $X$ is the almost limit of $S$. It follows from the definition of the maps $\varphi_\alpha$ that for every $f\in C(X)$ there exists $\alpha$ with $\varphi_\alpha\prec f$. So, $S$ is factorizing. This provides the proof of item (i).

Let $M$ be a space with a multiplicative lattice $\Phi$ of open maps and $X$ be strongly $\pi$-regularly  $C^*$-embedded in $M$.
Let $\mathrm{e}\colon\Tee_X\to\Tee_M$ be a strongly $\pi$-regular operator and $M_\phi=\phi(M)$ for any $\phi\in\Phi$.
We say that a map $\phi\in\Phi$ is $\mathrm{e}$-admissible if
$$\phi^{-1}(\phi(\overline{\mathrm{e}(\phi^{-1}(U)\cap X)}))=\overline{\mathrm{e}(\phi^{-1}(U)\cap X)}\hbox{~}\mbox{for all}\hbox{~}U\in\mathcal B_\phi,\leqno{(6)}$$
where $\mathcal B_\phi$ is an open
base for $M_\phi$ of cardinality $w(M_\phi)$. We are going
to show that the family $\Phi_X=\{\phi|X:\phi\hbox{~}\mbox{is $\mathrm{e}$-admissible}\hbox{~}\}$ is a multiplicative lattice on $X$ consisting of skeletally open maps. Our arguments follow the proof of Proposition 3.7 from \cite{vv}). Let $X_\varphi=\varphi(X)$, $\varphi\in\Phi_X$.

\textit{Claim $6.$ Any $\varphi\in\Phi_X$ is skeletal.}

Let $U\subset X$ be open in $X$ and $\varphi=\phi|X$ for some $\mathrm{e}$-admissible $\phi\in\Phi$. Because $\phi$ is
open, it suffices to show that $\displaystyle\phi(\reg(U))\cap
X_\varphi\subset\overline{\varphi(U)}^{X_\varphi}$. Suppose there
exists a point $z\in\phi(\reg(U))\cap
X_\varphi\backslash\overline{\varphi(U)}^{X_\varphi}$ and take
$V\in\mathcal B_\phi$ containing $z$ such that
$V\cap\overline{\varphi(U)}=\varnothing$ (here
$\overline{\varphi(U)}$ is the closure in $M_\phi$). Since
$\phi$ is $\reg$-admissible,
$\phi^{-1}\big(\phi\big(\overline{\reg(U_1)}\big)\big)=\overline{\reg(U_1)}$,
where $U_1=\phi^{-1}(V)\cap X$. Obviously, $U_1\cap
U=\varnothing$ and $\varphi(U_1)=V\cap X_\varphi$. Because
$\reg(U_1)\cap X$ is dense in $U_1$, we have
$\overline{\phi\big(\reg(U_1)\cap
X\big)}=\overline{\varphi(U_1)}=\overline{V\cap X_\varphi}$. Since
$\phi\big(\overline{\reg(U_1)}\big)$ is closed in $M_\phi$
(recall that $\phi$, being open, is a quotient map),
$z\in\phi(\overline{\reg(U_1)})\cap\phi(\reg(U))$ which
implies $\overline{\reg(U_1)}\cap\reg(U)\neq\varnothing$. So,
$\reg(U_1)\cap\reg(U)\neq\varnothing$, and consequently, $U\cap
U_1\neq\varnothing$. This contradiction completes the proof of the claim.

\textit{Claim $7.$ The diagonal product of any family $\{\phi_\alpha:\alpha\in A\}$ of $\reg$-admissible maps is $\reg$-admissible.}

For every $\alpha\in A$ fix a base $\mathcal B_{\phi_\alpha}$ for $M_{\phi_\alpha}$ satisfying condition $(6)$.
Let $\phi_0=\triangle_{\alpha\in A}\phi_\alpha$ and $V\in\mathcal B_{\phi_0}$, where $\mathcal B_{\phi_0}$ is the standard open base
of $M_{\phi_0}$ generated by all $\mathcal B_{\phi_\alpha}$. Then $\phi_0^{-1}(V)=\cap_{i=1}^{i=k}\phi_{\alpha(i)}^{-1}(V_i)$ for some
$V_i\in\mathcal B_{\phi_{\alpha(i)}}$. The equality $\mathrm{e}(\phi^{-1}(V)\cap X)=\cap_{i=1}^{i=k}\mathrm{e}(\phi_{\alpha(i)}^{-1}(V_i)\cap X)$ together with the facts that $\phi_{\alpha_0}$ is open and $\phi_{\alpha_0}\prec\phi_{\alpha_i}$ for each $i$, implies that
$\phi_{\alpha_0}^{-1}(\phi_{\alpha_0}(\overline{\mathrm{e}(\phi_{\alpha_0}^{-1}(V)\cap X)}))=\overline{\mathrm{e}(\phi_{\alpha_0}^{-1}(V)\cap X)}$.
Hence, $\phi_{\alpha_0}$ is $\reg$-admissible.

{\em Claim $8$. For every map $f:X\to f(X)$ there exists $\varphi\in\Phi_X$ such that
$\varphi\prec f$  and $w(X_\varphi)\leq w(f(X))$.}

\smallskip
We embed $f(X)$ in $\mathbb I^\tau$, where $\tau=w(f(X))$. Since $X$ is $C^*$-embedded in $M$, $f$ can be extended to a map
$\overline{f}:M\to\mathbb I^\tau$. Next, there exists $\phi_0\in\Phi$ with $\phi_0\prec\overline{f}$ and $w(M_{\phi_0})\leq\tau$.
We construct by induction a sequence $\{\phi_n\}_{n\geq
0}\subset\Phi$, such that for every $n\geq 0$ we have:
\begin{itemize}
\item $\phi_{n+1}\prec\phi_n$;
\item $w(M_{\phi_n})\leq\tau$;
\item $\phi_{n+1}$ satisfies condition $(6)$ for all $U\in\mathcal B_n$ with $\mathcal B_n$ being a base for $M_{\phi_n}$
of cardinality $\leq\tau$.
\end{itemize}

Suppose $\phi_n$ is already constructed. For each
$U\in\mathcal B_n$ there exists $\phi_U\in\Phi$ such that
$\phi_{U}^{-1}\big(\phi_{U}\big(\overline{\reg(\phi_{U}^{-1}(U)\cap
X)}\big)\big)=\overline{\reg(\phi_{U}^{-1}(U)\cap X)}$ and $w(M_{\phi_U})$ is countable (see
Lemma 3.6 from \cite{vv} for a similar statement). Obviously, $\phi_{n+1}=\triangle\{\phi_U:U\in\mathcal B_n\}$
 the above conditions.This completes the construction.
 It is easily seen that the diagonal product $\phi$ of all $\phi_n$ is $\reg$-admissible.  Hence,
$\varphi=\phi|X$ is as required.

Therefore, $\Phi_X=\{\phi|X:\phi\hbox{~}\mbox{is $\mathrm{e}$-admissible}\hbox{~}\}$ is indeed a multiplicative lattice on $X$ consisting of skeletal maps. Finally, according to (i), $X$ is skeletally Dugundji.
\end{proof}

\begin{cor}
Let $M$ be a product of separable metric spaces. Then every strongly $\pi$-regularly  $C^*$-embedded subset of $M$ is skeletally Dugundji.
\end{cor}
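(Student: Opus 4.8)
The plan is to reduce the corollary to Proposition~3.1(ii): since $M=\prod_{\gamma\in\Gamma}M_\gamma$ (all $M_\gamma$ separable metrizable) is a Tychonoff space, it suffices to show that $M$ admits a multiplicative lattice of open maps, for then every strongly $\pi$-regularly $C^*$-embedded subspace of $M$ is skeletally Dugundji by that proposition. The lattice I would use is $\Psi=\{\pi_B:B\subseteq\Gamma\}$, where $\pi_B\colon M\to M_B:=\prod_{\gamma\in B}M_\gamma$ denotes the projection onto the subproduct over $B$. Each $\pi_B$ is an open surjection, and since the factors are second countable, $w(M_B)\le|B|\cdot\aleph_0$. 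Condition $(\mathrm{L2})$ is immediate: for any family $\{\pi_{B_\alpha}:\alpha\in A\}\subseteq\Psi$ the diagonal product $\triangle_{\alpha\in A}\pi_{B_\alpha}$ has exactly the fibres of $\pi_B$, where $B=\bigcup_{\alpha\in A}B_\alpha$; identifying maps with equal fibres, as in Shchepin's framework \cite{s76}, this places $\triangle_{\alpha\in A}\pi_{B_\alpha}$ in $\Psi$.

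The substantive point is $(\mathrm{L1})$. Let $f\colon M\to f(M)$ be a map and put $\tau=w(f(M))$. Here I would invoke the classical factorization theorem for continuous maps off products of separable spaces (for $\mathbb R^\Gamma$ this is a theorem of Mazur; the general case is due to Engelking and to Noble--Ulmer), whose underlying point is that every subproduct of $M$ has countable cellularity, which one proves with the $\Delta$-system lemma: every continuous map from $M$ into a space of weight $\le\tau$ depends on at most $\tau$ coordinates. Hence there is $B\subseteq\Gamma$ with $|B|\le\tau$ and a continuous $g\colon M_B\to f(M)$ with $f=g\circ\pi_B$ (the map $g$ being continuous because $\pi_B$ is a quotient map). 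Thus $\pi_B\prec f$ and $w(M_B)\le|B|\cdot\aleph_0\le\tau=w(f(M))$, so $(\mathrm{L1})$ holds, $\Psi$ is a multiplicative lattice of open maps on $M$, and Proposition~3.1(ii) completes the proof.

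I expect the only genuine obstacle to be a careful justification of the factorization step at the stated generality — products of separable \emph{metrizable} (not merely completely metrizable) spaces — which rests on the ccc-ness of all subproducts of $M$; granting that, the verification of $(\mathrm{L1})$ and $(\mathrm{L2})$, the openness of the $\pi_B$, and the estimate $w(M_B)\le|B|\cdot\aleph_0$ are all routine bookkeeping.
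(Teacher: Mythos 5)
Your proposal is correct and is essentially the paper's own route: the corollary is stated as an immediate consequence of Proposition~3.1(ii), the implicit point being exactly what you verify, namely that the projections onto subproducts form a multiplicative lattice of open maps on $M$ (openness and $(\mathrm{L2})$ being routine, and $(\mathrm{L1})$ coming from the Engelking/Mazur--type factorization of maps on products of separable spaces through subproducts of $\leq\tau$ coordinates, plus the quotient property of the open surjection $\pi_B$).
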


A map $f\colon X\to Y$ is said to have a $\pi$-weight $\leq\tau$ \cite{sh}, notation $\pi w(f)\leq\tau$, if there exists a family $\mathcal B$ of functionally open sets in
$X$ with $|\mathcal B|\leq\tau$ such that $\gamma=\{U\cap f^{-1}(V): U\in\mathcal B, V\in\Tee_X\}$ is a $\pi$-base for $X$ (i.e., every open subset of $X$ contains some $W\in\gamma$). Everywhere below $p(X)$ denotes the absolute of $X$ and $\theta_X\colon p(X)\to X$ is the canonical irreducible map.

\begin{thm}\label{3.3}
For any space $X$ the following are equivalent:
\begin{itemize}
\item[(i)] $X$ is skeletally Dugundji;
\item[(ii)] Every compactification of $X$ is co-absolute to a Dugundji space;
\item[(iii)] $X$ has a compactification co-absolute to a Dugundji space;
\item[(iv)] Every $C^*$-embedding of the absolute $p(X)$ in another space is strongly $\pi$-regular;
\item[(v)] $X$ has a multiplicative lattice of skeletal maps.
\end{itemize}
\end{thm}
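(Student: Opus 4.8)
The plan is to prove the cycle of implications
\[
(v)\Rightarrow(i)\Rightarrow(ii)\Rightarrow(iii)\Rightarrow(iv)\Rightarrow(v),
\]
invoking the preceding material of Sections 2 and 3 at each step. The implication $(v)\Rightarrow(i)$ is precisely Proposition 3.1(i), so nothing new is needed there. For $(i)\Rightarrow(ii)$, suppose $X=\mathrm{a}$-$\displaystyle\lim_\leftarrow S$ for a factorizing inverse system $S=\{X_\alpha,p^\beta_\alpha\}$ with skeletal bonding maps, metrizable kernels at successors, and separable metric $X_0$. Given a compactification $cX$ of $X$, I would build a compactification of the whole system: put $Y_0=\overline{X_0}$ in a metric compactification, and at each successor stage take the closure of $X_{\alpha+1}$ in $Y_\alpha\times\widehat M_\alpha$ where $\widehat M_\alpha$ is a metric compactification of $M_\alpha$, passing to (genuine, not almost) inverse limits at limit stages. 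The resulting continuous inverse system $\widehat S=\{Y_\alpha,q^\beta_\alpha\}$ has compact metrizable-kernel bonding maps; the maps $q^\beta_\alpha$ remain skeletal because skeletality is preserved when passing to a limit of a continuous system of skeletal maps and is inherited under the natural extensions (one checks $\mathrm{Int}\,\mathrm{cl}\,q^{\alpha+1}_\alpha(W)\neq\varnothing$ using density of $X_{\alpha+1}$ in $Y_{\alpha+1}$ and the skeletality of $p^{\alpha+1}_\alpha$). Then $\displaystyle\lim_\leftarrow\widehat S$ is a compactification of $X$, and by Shapiro's characterization quoted after Proposition 2.1 it is co-absolute to a Dugundji space. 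Since any two compactifications of $X$ have homeomorphic absolutes (the absolute of $X$ depends only on $X$, via $p(\beta X)$), every compactification of $X$ is co-absolute to a Dugundji space.

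The implications $(ii)\Rightarrow(iii)$ is trivial, since $X$ has at least one compactification. For $(iii)\Rightarrow(iv)$: let $cX$ be a compactification co-absolute to a Dugundji space $D$. A Dugundji space is an $AE(0)$-space, hence by the discussion after Proposition 2.1 it has a multiplicative lattice of open maps, so by Corollary 2.3 the space $cX$ has a multiplicative lattice of semi-open maps. Now given a $C^*$-embedding of $p(X)=p(\beta X)=p(cX)$ into a space $T$, I want a strongly $\pi$-regular operator $\mathrm{e}\colon\Tee_{p(X)}\to\Tee_T$. Here the key technical input is that $p(cX)$ is extremally disconnected, so open sets have open closures and one can manufacture $\mathrm{e}(U)$ as (the trace on $T$ of) a suitable regular-open set of $T$ by pulling back along the irreducible maps; the multiplicative lattice of semi-open maps on $cX$ transfers, through the common absolute $Z=p(cX)$ and the proof-technique of Theorem 2.2, into a factorizing family of skeletal maps on $p(X)$ itself, and $\mathrm{e}$ is then assembled coordinatewise so that $(e1)$ and $(e2)$ hold. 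This is essentially the construction in the proof of Proposition 3.1(ii) run in reverse: a multiplicative lattice of skeletal maps on a space is equivalent to its being strongly $\pi$-regularly $C^*$-embedded wherever it sits $C^*$-embedded.

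Finally, $(iv)\Rightarrow(v)$: assume every $C^*$-embedding of $p(X)$ is strongly $\pi$-regular. In particular $p(X)$, which is extremally disconnected, is strongly $\pi$-regularly $C^*$-embedded in $\beta(p(X))=p(\beta X)$, a compact extremally disconnected space; the latter is an $AE(0)$-space (compact extremally disconnected spaces are absolute retracts for $0$-dimensional spaces), hence has a multiplicative lattice of open maps. By Proposition 3.1(ii), $p(X)$ is skeletally Dugundji, so by the already-established $(i)\Leftrightarrow(v)$ part, $p(X)$ has a multiplicative lattice $\Psi$ of skeletal maps. To descend from $p(X)$ to $X$ along the irreducible map $\theta_X\colon p(X)\to X$, note that $\theta_X$ is skeletal (indeed semi-open, since it is irreducible and perfect: $\mathrm{Int}_X\theta_X(U)\neq\varnothing$ for $U$ open), so composing: for $\phi\in\Psi$ one would like $\phi$ to factor through $\theta_X$; the correct move is instead to run the argument of Theorem 2.2 with $X$ and $p(X)$ as the co-absolute pair (their common absolute being $p(X)=p(\beta X)$ itself), obtaining from the multiplicative lattice of skeletal — in fact we may take open, since $p(X)$ sits in an $AE(0)$ — maps a multiplicative lattice of semi-open, hence skeletal, maps on $X$. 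The main obstacle throughout is this bookkeeping of transferring lattices across irreducible maps while controlling weights and keeping the families multiplicative; the extremal disconnectedness of the absolute and Proposition 2.1 (which guarantees one can always enlarge index sets to admissible ones of the same cardinality) are what make it go through, exactly as in Theorem 2.2 and Proposition 3.1.
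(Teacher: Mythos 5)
Your cycle $(v)\Rightarrow(i)\Rightarrow(ii)\Rightarrow(iii)$ is in good shape: $(v)\Rightarrow(i)$ is Proposition 3.1(i), and your $(i)\Rightarrow(ii)$ (compactify the whole system, check that the extended bonding maps stay skeletal by density, apply Shapiro's theorem, then use that all compactifications of $X$ are co-absolute) is a legitimate variant of the paper's argument, which instead uses the factorizing property to identify $\beta X$ with the limit of the system $\{\beta X_\alpha\}$. The problems are in the two remaining implications. For $(iii)\Rightarrow(iv)$ you never actually construct the operator $\mathrm{e}\colon\Tee_{p(X)}\to\Tee_{T}$ for an \emph{arbitrary} space $T$ in which $p(X)$ is $C^*$-embedded: a multiplicative lattice of skeletal (or semi-open) maps on $cX$ or on $p(X)$ carries no information about $T$, and ``assembled coordinatewise'' is not an argument. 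Worse, the principle you invoke --- that Proposition 3.1(ii) can be ``run in reverse,'' i.e.\ that a multiplicative lattice of skeletal maps is \emph{equivalent} to being strongly $\pi$-regularly $C^*$-embedded wherever the space sits --- is precisely the implication $(v)\Rightarrow(iv)$ you are in the middle of proving, so the step is circular. The missing idea is an extension theorem: since $\beta X$ is co-absolute to a Dugundji space $Y\in AE(0)$ whose absolute is $p(\beta X)=\beta p(X)$, and since $p(\beta Z)$ ($Z\supset p(X)$ the ambient space) is compact, extremally disconnected, hence zero-dimensional, the map $\theta_Y\circ\theta_{\beta Z}|H$ on $H=\theta_{\beta Z}^{-1}(\beta p(X))$ extends to $\phi\colon p(\beta Z)\to Y$, and one then sets $\mathrm{e}(U)=\theta_{\beta Z}^{\sharp}\bigl(\phi^{-1}(\theta_Y^{\sharp}(U))\bigr)$; it is this use of the $AE(0)$ property that makes $(e1)$--$(e2)$ checkable. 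Nothing in your sketch produces such a map $\phi$. (Also, $p(X)=p(\beta X)=p(cX)$ is false for noncompact $X$: $p(\beta X)=\beta p(X)\supsetneq p(X)$.)

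The step $(iv)\Rightarrow(v)$ rests on a false statement: $\beta p(X)=p(\beta X)$ is \emph{not} an $AE(0)$-space and does not have a multiplicative lattice of open maps. Compact extremally disconnected spaces are the projective compacta (Gleason), not absolute extensors in dimension $0$; an infinite extremally disconnected compactum (e.g.\ $\beta\mathbb N$) is not dyadic and not $\kappa$-metrizable, hence not Dugundji, hence not $AE(0)$. Consequently your application of Proposition 3.1(ii) with ambient space $\beta p(X)$ is unjustified, and so is the later remark ``in fact we may take open, since $p(X)$ sits in an $AE(0)$,'' which you need because Theorem 2.2 requires a lattice of \emph{open} maps on one member of the co-absolute pair. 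The workable route (the paper's) is: apply $(iv)$ to a $C^*$-embedding of $p(X)$ (equivalently of $p(\beta X)$) into a Tychonoff cube --- a space that genuinely has a multiplicative lattice of open maps --- and use Corollary 3.2 to conclude that $p(\beta X)$ is skeletally Dugundji; then by Shapiro's theorem $p(\beta X)$ is co-absolute to a Dugundji space $Y$, and extremal disconnectedness makes $p(\beta X)$ the absolute of $Y$; now Theorem 2.2, applied to the open lattice on $Y$ and the co-absolute space $\beta X$, yields a multiplicative lattice of skeletal maps on $\beta X$, which restricts to the required lattice on $X$. As written, your proof of $(iii)\Rightarrow(iv)\Rightarrow(v)$ has genuine gaps at exactly the two places where the Dugundji/$AE(0)$ extension property must be brought in.
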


\begin{proof}
$(i)\Rightarrow (ii)$. Since every compactification of $X$ is an irreducible image of $\beta X$, it suffices to show that $\beta X$ is co-absolute with a Dugundji space.
Let $S=\displaystyle\{X_\alpha, p^{\gamma}_\alpha, \alpha<\gamma<\tau\}$ be a factorizing inverse system with skeletal maps such that $X=\mathrm{a}-\displaystyle\lim_\leftarrow S$,
$X_0$ is second countable and $p^{\alpha+1}_\alpha$ has a metrizing kernel for all $\alpha$.
Take a second countable compactification $Y_0$ of $X_0$ and consider the inverse system $\tilde{S}=\displaystyle\{Y_\alpha, q^{\gamma}_\alpha, \alpha<\gamma<\tau\}$, where $Y_\alpha=\beta X_\alpha$, $q^{\gamma}_\alpha=\beta(p^{\gamma}_\alpha)$ for $\alpha>0$, and
$q^{\gamma}_0\colon\beta X_\gamma\to Y_0$ is the extension of $p^{\gamma}_0$. Then $\pi w(q^{\alpha+1}_\alpha)\leq\aleph_0$ for each $\alpha$  because $p^{\alpha+1}_\alpha$ has a metrizable kernel. Moreover, all projections $q^{\gamma}_\alpha$ are skeletal. Since $S$ is factorizing, $\beta X$ is the limit space of $\tilde{S}$.
Therefore, we can apply
Shapiro's result \cite[Theorem 2]{sh} to conclude that $\beta X$ is co-absolute to a Dugundji space.

$(ii)\Leftrightarrow (iii)$. If there exists a compactification $c(X)$ of $X$ which is co-absolute to a
 Dugundji space, then so is $\beta X$ as an irreducible preimage of $X$. Hence, every compactification of $X$ has this property.
 
$(ii)\Rightarrow (iv)$. Suppose $\beta X$ is co-absolute to an $AE(0)$-space $Y$ and let
$p(X)$ be $C^*$-embedded in a space $Z$. Then $\overline{p(X)}^{\beta Z}$ is homeomorphic to $\beta p(X)$. Since $\beta p(X)=p(\beta X)$, $\beta p(X)$ is the absolute of $Y$ . Consider the canonical irreducible maps $\theta_{\beta Z}: p(\beta Z)\to\beta Z$ and $\theta_Y:\beta p(X)\to Y$.
Because $Y\in AE(0)$, the restriction $\varphi= \theta_Y\circ\theta_{\beta Z}|H:H\to Y$ has a continuous extension $\phi: p(\beta Z)\to Y$ , where
$H=\theta_{\beta Z}^{-1}(\beta p(X))$. Finally, for every open $U\subset\beta p(X)$ define
$\mathrm{e}(U)=\theta_{\beta Z}^\sharp(\phi^{-1}(\theta_Y^\sharp(U)))$. It is easily seen that $\mathrm{e}:\Tee_{\beta p(X)}\to\Tee_{\beta Z}$ is a strongly $\pi$-regular operator. This implies that $p(X)$ is strongly $\pi$-regularly embedded in $Z$.

$(iii)\Rightarrow (iv)$.
Obviously, item $(iii)$ implies that every embedding of $p(\beta X)$ in any Tychonoff cube $\mathbb I^\tau$ is strongly $\pi$-regular. So, by Corollary 3.2, $p(\beta X)$ is the limit
space of a continuous inverse system $\{Z_\alpha, q^{\gamma}_\alpha, \alpha<\gamma<\tau\}$ with skeletal projections such that $Z_0$
is a metric compactum and each $q^{\alpha+1}_\alpha$ has a metrizable kernel. Then, according to \cite[Theorem 2]{sh}, $p(\beta X)$ is
co-absolute to a Dugundji space $Y$. Because $p(\beta X)$ is extremally disconnected, it is the absolute of $Y$. Since $Y$ has a multiplicative
 lattice of open maps (as an $AE(0)$-space), $\beta X$ has a multiplicative lattice $\Psi$ of skeletal maps (by Theorem 2.2).
Finally, observe that  $\Psi_X=\{\varphi|X:\varphi\in\Psi\}$ is a multiplicative lattice on $X$ consisting of skeletal maps.

$(iv)\Rightarrow (i)$. This implication follows directly from Proposition 3.1.
\end{proof}

Recall that $X$ is skeletally generated \cite{vv} if there exists a factorizing inverse
system $\displaystyle S=\{X_\alpha, p^{\beta}_\alpha, A\}$ consisting of
separable metric spaces $X_\alpha$ and surjective skeletal bonding maps such that:
(i) The index set $A$ is $\sigma$-complete (every countable chain in
$A$ has a supremum in $A$);
(ii) For every countable chain $\{\alpha_n:n\geq 1\}\subset A$ with
$\beta=\sup\{\alpha_n:n\geq 1\}$ the space $X_\beta$ is a (dense)
subset of
$\displaystyle\lim_\leftarrow\{X_{\alpha_n},p^{\alpha_{n+1}}_{\alpha_n}\}$;
(iii) $X$ is embedded in $\displaystyle\lim_\leftarrow
S$ such that $p_\alpha(X)=X_\alpha$ for each $\alpha$.

\begin{cor}
Every skeletally Dugundji space is skeletally generated.
\end{cor}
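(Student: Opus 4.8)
The plan is to derive this from the definition of skeletally Dugundji together with the reindexing technique used in Corollary 2.3. Suppose $X$ is skeletally Dugundji, witnessed by a factorizing inverse system $S=\{X_\alpha, p^\beta_\alpha, \alpha<\beta<\tau\}$ with surjective skeletal bonding maps, where $X_0$ is separable metric, each $p^{\alpha+1}_\alpha$ has a metrizable kernel (so each $X_\alpha$ is separable metric by induction along the transfinite sequence, using that metrizable kernels keep second countability at successors and that a subspace of a countable product of second countable spaces is second countable at limits), conditions (ii) and (iii) hold, and (iv) holds. I want to produce a new factorizing system indexed by a $\sigma$-complete partially ordered set $A$ consisting of separable metric spaces and skeletal maps, satisfying the three conditions in the definition of skeletally generated.

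First I would take the index set $A$ to be a cofinal family of countable subsets of $\tau=\omega(\tau)$, ordered by inclusion, chosen so that $A$ is closed under countable unions (hence $\sigma$-complete) and so that the associated partial limit spaces behave well — concretely, mimic the construction in Corollary 2.3: for each ordinal $\alpha<\omega(\tau)$ pick a suitable countable ``block'' and let $A$ consist of the countable unions of blocks. For $B\in A$ set $Y_B = p_{\sup B}(X)$ or, more carefully, the image of $X$ under the diagonal of the projections indexed by $B$; equivalently, re-run the argument that builds the original tower but index it by countable subsets rather than by ordinals. Each $Y_B$ is separable metric because it is (a continuous image / subspace associated with) a countable subsystem of separable metric spaces. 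The bonding maps $p^{B'}_B\colon Y_{B'}\to Y_B$ for $B\subset B'$ are compositions/limits of the original skeletal maps, hence skeletal (skeletality is preserved under composition and, crucially, under passing to limits of inverse systems of skeletal maps — this is the same fact used implicitly when forming diagonal products of skeletal maps in Proposition 3.1).

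Then I would check the three defining conditions of ``skeletally generated.'' Condition (i): $A$ is $\sigma$-complete by construction. Condition (ii): for a countable chain $B_1\subset B_2\subset\cdots$ with $\beta=\bigcup B_n\in A$, the space $Y_\beta$ is a dense subset of $\varprojlim\{Y_{B_n}, p^{B_{n+1}}_{B_n}\}$; this follows because a point of the inverse limit is a compatible thread over $\beta=\bigcup B_n$, and density reduces to condition (ii) of the original system applied along $\beta$ viewed as a limit ordinal-like union, together with the factorizing property to control bounded functions — essentially this is the reindexing of condition (ii). Condition (iii): $X$ embeds in $\varprojlim\{Y_B, p^{B'}_B, A\}$ with $p_B(X)=Y_B$, which is immediate from condition (iii) of $S$ since the $A$-indexed limit is canonically homeomorphic (via the identity on threads) to $\varprojlim S$. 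Finally, the system is factorizing: given a bounded continuous $f$ on the limit, by (iv) for $S$ there is an ordinal $\alpha$ with $p_\alpha\prec f$; choosing any $B\in A$ with $\alpha\in B$ (possible since $A$ is cofinal) gives $p_B\prec f$.

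The main obstacle I expect is verifying condition (ii) of ``skeletally generated'' — that $Y_\beta$ is \emph{dense} in the countable inverse limit over a chain — cleanly, since the original definition's condition (ii) is phrased for limit ordinals $\gamma<\tau$ in a well-ordered tower, whereas here I need it along arbitrary increasing unions of countable index sets. The fix is to be careful in the \emph{construction} of $A$: build it (as in Corollary 2.3) so that each $B\in A$ is of the form $\bigcup\{A(\xi):\xi\in B_0\}$ for a countable set $B_0$ of ordinals and suitable fixed countable ``admissible'' pieces $A(\xi)$, and so that for each such $B$ the map from $X$ onto $Y_B$ factors through the ordinal-indexed tower up to $\sup B_0$; then density along chains in $A$ is literally density along the corresponding limit ordinals in $S$, i.e. condition (ii) of $S$. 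The rest is routine bookkeeping about diagonal products and preservation of skeletality.
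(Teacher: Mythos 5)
Your proposal has a genuine gap, in fact two. First, the claim that every $X_\alpha$ in the defining tower is separable metric is false: at a limit ordinal $\gamma$ of uncountable cofinality, $X_\gamma$ is a dense subspace of $\varprojlim\{X_{\alpha},p^\beta_\alpha, \alpha<\beta<\gamma\}$, which sits in a product of $|\gamma|$ many factors, so in general $w(X_\gamma)=|\gamma|$; only $X_0$ and the kernels $M_\alpha$ are second countable. Consequently, if $Y_B$ is $p_{\sup B}(X)$, or the image of $X$ under the diagonal of the limit projections $p_\alpha$, $\alpha\in B$ (note that when $B$ has a largest element $\beta$ this diagonal embeds $X_\beta$, so $Y_B\cong X_\beta$), then $Y_B$ is not separable metric and the system you build violates the first requirement in the definition of skeletally generated. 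To get second countable images you must project onto countably many \emph{kernel} coordinates of the ambient product $X_0\times\prod_{\alpha}M_\alpha$, i.e.\ onto coordinate sets that are not initial segments of $\tau$.

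Second, and this is the heart of the matter: once $B$ is not an initial segment, the projection $p_B$ and the bonding maps $p^{B'}_B$ are not compositions or inverse limits of the given skeletal bonding maps, and their skeletality is not automatic -- restrictions of projections of a product to arbitrary countable coordinate sets need not be skeletal. Producing a cofinal, countably closed family of ``admissible'' coordinate sets for which these restricted projections are skeletal is precisely the nontrivial content of Theorem 2.2 (Claims 2--5) and of Theorem 3.3; it cannot be dismissed as ``routine bookkeeping about diagonal products and preservation of skeletality''. The paper avoids your reindexing altogether: by Theorem 3.3, a skeletally Dugundji space has a multiplicative lattice $\Psi$ of skeletal maps, and the corollary follows by taking $\Psi_0=\{\phi\in\Psi: w(\phi(X))\leq\aleph_0\}$ ordered by factorization; condition (L2) gives $\sigma$-completeness and the density condition along countable chains, (L1) gives the factorizing property, and the bonding maps are skeletal since they factor skeletal maps through surjections. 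So the density condition you singled out as the main obstacle is in fact the easy part, while the step you treat as formal (skeletality of the reindexed maps, together with second countability of their ranges) is exactly where the lattice machinery, or an argument in the spirit of Claim 4 of Theorem 2.2 and Lemma 3.6 of \cite{vv}, is indispensable.
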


\begin{proof}
Let $\Psi$ be a multiplicative lattice for $X$ consisting of skeletal maps and $\Psi_0=\{\phi\in\Psi:w(\phi(X))\leq\aleph_0\}$.
Define $X_\phi=\phi(X)$ and equip $\Psi_0$ with the partial order $\phi_1\prec\phi_2$ iff there exists a map
$\phi^2_1\colon X_{\phi_1}\to X_{\phi_1}$ such that $\phi^2_1\circ\phi_2=\phi_1$. It is easily seen that the inverse system
$\displaystyle S=\{X_\phi, \phi^{\beta}_\alpha, \Psi_0\}$ satisfies all conditions from the definition of skeletally generated spaces.
\end{proof}

\begin{cor}
Any perfect image of a normal space possessing a multiplicative  lattice of open maps is skeletally Dugundji. In particular, any dyadic compactum is
skeletally Dugundji.
\end{cor}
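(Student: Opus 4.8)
The plan is to reduce both assertions to results already established in Sections 2 and 3. For the first statement, suppose $f\colon X\to Y$ is a perfect surjection, where $X$ is normal and carries a multiplicative lattice $\Psi$ of open maps. The key observation is that a perfect map between Tychonoff spaces induces a homeomorphism (or at least a co-absoluteness) at the level of absolutes: more precisely, $p(Y)$ is co-absolute with $p(X)$ because the composition $p(X)\xrightarrow{\theta_X} X\xrightarrow{f} Y$ is a perfect surjection, and factoring its irreducible part through $p(Y)$ gives irreducible perfect maps $p(X)\to p(Y)$ witnessing that $X$ and $Y$ have a common absolute (after passing to $p(X)$, which is already extremally disconnected). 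Hence $Y$ is co-absolute with $X$, and $X$ possesses a multiplicative lattice of open maps by hypothesis. Now Corollary~2.3 applies directly: every space co-absolute with a space possessing a multiplicative lattice of open maps is skeletally Dugundji. Therefore $Y$ is skeletally Dugundji.

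First I would make precise the co-absoluteness claim. Since $f\circ\theta_X\colon p(X)\to Y$ is perfect and surjective, it factors as $g\circ h$ where $h\colon p(X)\to p(X)$ can be taken to be the identity and we instead use the standard fact (Ponomarev–Iliadis theory) that any perfect irreducible preimage realizes the absolute; concretely, $p(X)$ maps perfectly and irreducibly onto $Y$ precisely when the map is irreducible, which one arranges by restricting to a suitable closed subset of $p(X)$ on which $f\circ\theta_X$ becomes irreducible. That closed subset, being extremally disconnected and mapping perfectly irreducibly onto both $X$ (via $\theta_X$ restricted) — wait, one must be slightly careful: the cleanest route is to note $p(Y)=p(\beta Y)$ and similarly for $X$, and that perfect maps extend to the Stone–Čech compactifications, reducing everything to the compact case where the statement "perfect image of a compactum with a multiplicative lattice of open maps is co-absolute to it" is classical. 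I would cite or invoke this compact co-absoluteness and then lift via $\beta$.

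For the second statement, a dyadic compactum $K$ is by definition a continuous image of a Cantor cube $2^{\tau}$. The Cantor cube $2^{\tau}$ is an $AE(0)$-space (indeed a Dugundji space), hence possesses a multiplicative lattice of open maps; it is also compact, hence normal. A continuous surjection between compacta is perfect. Therefore $K$ is a perfect image of a normal space with a multiplicative lattice of open maps, and the first part of the corollary gives that $K$ is skeletally Dugundji.

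The main obstacle is the co-absoluteness step: one must verify carefully that a perfect (not necessarily irreducible) surjection $f\colon X\to Y$ forces $p(X)$ and $p(Y)$ to coincide, which requires passing to an irreducible restriction and checking it remains perfect and that its domain is still (co-absolute to) the absolute of $X$ — this is where normality of $X$ is used, to ensure $\beta$ behaves well with respect to $f$ and that the absolutes are computed correctly. Everything after that is a direct appeal to Corollary~2.3.
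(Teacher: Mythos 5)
The central step of your argument --- ``$Y$ is co-absolute with $X$, hence Corollary 2.3 applies'' --- is false in general and cannot be repaired in the form you state it. A perfect surjection does not force the domain and the image to have the same absolute: for instance the Cantor cube $2^{\omega_1}$ maps onto $[0,1]$, but these are not co-absolute (their $\pi$-weights, which are invariants of the absolute, differ). What a perfect surjection $f\colon M\to X$ does give is a closed set $Z\subset M$ on which $g=f|Z$ is irreducible, and then $X$ is co-absolute with $Z$ --- not with $M$. You half-recognize this when you try to restrict to an irreducible part, but at that point your argument stalls: the restricted closed subset of $p(M)$ (or of $M$) need not be extremally disconnected, need not map onto $M$ at all, and, most importantly, there is no reason for $Z$ to inherit a multiplicative lattice of open maps from $M$, so Corollary 2.3 cannot be invoked for the pair $(X,Z)$ any more than for $(X,M)$.

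This is exactly the gap the paper's proof fills, and it is the real content of the corollary: one shows that $Z$ is \emph{strongly $\pi$-regularly $C^*$-embedded} in $M$ via the operator $\mathrm{e}(U)=f^{-1}(g^\sharp(U))$ for $U$ open in $Z$, where $g^\sharp(U)=\{x\in X: g^{-1}(x)\subset U\}$ (irreducibility of $g$ makes $\mathrm{e}(U)\cap Z$ dense in $U$, and $g^\sharp$ preserves finite intersections). Normality of $M$ enters precisely here, through Tietze, to guarantee that the closed set $Z$ is $C^*$-embedded in $M$; it is not about ``$\beta$ behaving well with $f$.'' Then Proposition 3.1(ii) yields that $Z$ is skeletally Dugundji, Theorem 3.3 gives that $\beta Z$ is co-absolute to a Dugundji space, and since $\beta g\colon\beta Z\to\beta X$ is irreducible one gets $p(\beta X)=p(\beta Z)$, whence $X$ is skeletally Dugundji by Theorem 3.3 again. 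Your treatment of the dyadic case is fine once the first assertion is available, but as written the first assertion is not proved.
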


\begin{proof}
Suppose $M$ is a normal space with a multiplicative lattice of open maps and let $f\colon M\to X$ be a perfect surjection. Then there exists
a closed subset $Z$ of $M$ such that the restriction $g=f|Z\colon Z\to X$ is irreducible. One can see that
$\mathrm{e}(U)=f^{-1}(g^\sharp(U))$, $U\in\Tee_Z$, defines a strongly $\pi$-regular operator $\mathrm{e}\colon\Tee_{Z}\to\Tee_{M}$. Hence, by Proposition 3.1, $Z$ is a skeletally Dugundji space. Next, Theorem 3.3 yields that $\beta Z$ is co-absolute to a Dugundji space. Finally, since
$\beta g\colon\beta Z\to\beta X$ is irreducible, $p(\beta X)=p(\beta Z)$.  Therefore, $X$ is skeletally Dugundji.
\end{proof}

\begin{cor}
Let $X$ be a skeletally Dugundji space. Then for every $C^*$-embedding of $p(X)$ in any extremally
 disconnected space $Z$ there exists a retraction from $\beta Z$ onto $p(\beta X)$.
\end{cor}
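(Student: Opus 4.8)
The statement to be proved is Corollary 3.6: if $X$ is skeletally Dugundji and $p(X)$ is $C^*$-embedded in an extremally disconnected space $Z$, then there is a retraction from $\beta Z$ onto $p(\beta X)$. The strategy is to pass everything to the level of $\beta$ so that the absolute machinery of the previous results applies cleanly, and then to realise the retraction as an extension of the identity map using the defining extension property of $AE(0)$-spaces.

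First I would reduce to the case where $Z$ is compact. Since $p(X)$ is $C^*$-embedded in $Z$, the closure $\overline{p(X)}^{\beta Z}$ is homeomorphic to $\beta p(X)$; and because $p(X)$ is extremally disconnected, $\beta p(X)=p(\beta X)$ (the absolute of $\beta X$). Also $\beta Z$ is extremally disconnected, being the Stone–Čech compactification of an extremally disconnected space; thus it suffices to produce a retraction $r\colon\beta Z\to p(\beta X)$ with $p(\beta X)$ sitting inside $\beta Z$ as the closure of $p(X)$. So the problem is now: $K:=p(\beta X)$ is a compact extremally disconnected space embedded in a compact extremally disconnected space $W:=\beta Z$, and we want a retraction $W\to K$.

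Next I would invoke Theorem 3.3: since $X$ is skeletally Dugundji, $\beta X$ is co-absolute to a Dugundji space $Y$, i.e. $K=p(\beta X)=p(Y)$ is the absolute of an $AE(0)$-space $Y$. Now recall that the absolute of a compact space is extremally disconnected and that, for compacta, $K$ being the absolute of an $AE(0)$-space means $K$ is itself an $AE(0)$-space: indeed $AE(0)$ is equivalent to being a retract of a Tychonoff cube's absolute, or more directly one uses that the absolute functor preserves the relevant lifting property against zero-dimensional domains — extremally disconnected compacta are zero-dimensional, so the canonical irreducible map $\theta_Y\colon K\to Y$ together with the $AE(0)$-property of $Y$ transfers the extension property to $K$ (this is the same kind of argument already used in the proof of $(ii)\Rightarrow(iv)$ in Theorem 3.3, where $\theta_Y\circ\theta_{\beta Z}|H$ is extended because $Y\in AE(0)$). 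Having established $K\in AE(0)$, the retraction is immediate: the identity map $\mathrm{id}\colon K\to K$ is a continuous map from the closed subspace $K$ of the zero-dimensional compactum $W=\beta Z$ into the $AE(0)$-space $K$, so it extends to a continuous map $r\colon W\to K$, which is the desired retraction.

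**Main obstacle.** The one step that needs genuine care is the transfer "the absolute of an $AE(0)$-compactum is an $AE(0)$-compactum." One must check that $K=p(Y)$ with $Y\in AE(0)$ really is an absolute extensor for zero-dimensional spaces, not merely co-absolute to one. The clean way is: given a zero-dimensional compact $T$, a closed $T_0\subset T$, and a map $h\colon T_0\to K$, compose with $\theta_Y$ to get $\theta_Y\circ h\colon T_0\to Y$, extend it over $T$ to $\widetilde g\colon T\to Y$ using $Y\in AE(0)$, then lift $\widetilde g$ through the irreducible (hence, on the extremally disconnected absolute, "projective") map $\theta_Y$ so that the lift agrees with $h$ on $T_0$ — here one uses that $\theta_Y\colon K\to Y$ is a perfect irreducible map from an extremally disconnected space, which makes $K$ behave like a projective object in the category of compacta (Gleason). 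This lifting is exactly the argument pattern already exploited twice in Section 3, so it may simply be cited; the rest of the corollary is then routine.
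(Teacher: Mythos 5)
Your reduction to the compact setting is fine, but the key claim on which the whole proof rests --- that $K=p(\beta X)$, being the absolute of a Dugundji space $Y$, is itself an $AE(0)$-compactum --- is false. By Haydon's theorem a compact $AE(0)$-space is Dugundji, hence dyadic, and an infinite dyadic compactum contains nontrivial convergent sequences; an infinite extremally disconnected compactum contains none. So the absolute of an infinite $AE(0)$-compactum is never $AE(0)$: already for $X=2^{\omega}$ the space $p(\beta X)=p(2^{\omega})$ fails your claim. The attempted justification confuses projectivity with injectivity: Gleason's theorem makes an extremally disconnected compactum a \emph{projective} object (maps \emph{from} it lift through surjections), not an absolute extensor (maps \emph{into} it extend). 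In your lifting scheme the map to be lifted, $\widetilde g\colon T\to Y$, has an arbitrary zero-dimensional compactum $T$ as domain, so no lift through the irreducible map $\theta_Y\colon K\to Y$ need exist; and even when a lift exists, it cannot in general be chosen to agree with the prescribed $h$ on $T_0$. Consequently the step ``extend $\mathrm{id}_K$ over $\beta Z$ because $K\in AE(0)$'' has no support.

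The idea is salvageable only by exploiting that the ambient space here is $\beta Z$, which \emph{is} extremally disconnected: extend $\theta_Y\colon p(\beta X)\to Y$ (not the identity) to a map $\phi\colon\beta Z\to Y$ using $Y\in AE(0)$ and $\dim\beta Z=0$, lift $\phi$ through $\theta_Y$ using projectivity of $\beta Z$, and then argue separately (via irreducibility of $\theta_Y$) that a self-map of $p(\beta X)$ preserving the fibers of $\theta_Y$ must be the identity, so the lift restricts to the identity on $p(\beta X)$. None of this rigidity argument appears in your write-up, and it is also not how the paper proceeds: the paper invokes Theorem 3.3(iv) to get a strongly $\pi$-regular operator $\mathrm{e}\colon\mathcal T_{p(\beta X)}\to\mathcal T_{\beta Z}$, replaces it by $\overline{\mathrm{e}}(U)=\overline{\mathrm{e}(U)}$, builds from it an upper semi-continuous compact-valued map $\Phi\colon\beta Z\to 2^{p(\beta X)}$ fixing the points of $p(\beta X)$, and obtains the retraction as a continuous selection via Hasumi's selection theorem for extremally disconnected spaces. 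As written, your proof has a genuine gap.
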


\begin{proof}
By Theorem 3.3, there exists a
strongly $\pi$-regular operator
$\mathrm{e}\colon\Tee_{p(\beta X)}\to\Tee_{\beta Z}$. Then the operator $\mathrm{\overline{e}}\colon\Tee_{p(\beta X)}\to\Tee_{\beta Z}$, $\mathrm{\overline{e}}(U)=\overline{\mathrm{e}(U)}$ has the following properties:
$\mathrm{\overline{e}}(U\cap V)=\mathrm{\overline{e}}(U)\cap\mathrm{\overline{e}}(V)$ and
$\mathrm{\overline{e}}(U)\cap p(X)=\overline{U}$ for any open sets $U,V\subset p(\beta X)$. Next, define the set-valued map
$\Phi\colon\beta Z\to 2^{p(\beta X)}$, $\Phi(z)=\bigcap\{\overline{U}:z\in\mathrm{\overline{e}}(U), U\in\Tee_{p(\beta X)}\}$ if
$z\in\bigcup\{\mathrm{\overline{e}}(U):U\in\Tee_{p(\beta X)}\}$, and $\Phi(z)=p(\beta X)$ otherwise. It is easily seen that $\Phi$ is an upper semi-continuous map with compact non-empty values and $\Phi(z)=z$ for $z\in p(\beta X)$. Finally, according to \cite{has}, $\Phi$ has a continuous selection $r\colon\beta Z\to p(\beta X)$. Obviously,  $r$ is a retraction.
\end{proof}

A combination of Corollary 3.5 and Corollary 3.6 implies a generalization of the following result \cite{b}: the absolute of any dyadic compactum is a retract of any extremally disconnected space in which it is embedded.
\begin{cor}
Any product of skeletally Dugundji spaces is skeletally Dugundji.
\end{cor}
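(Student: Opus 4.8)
The plan is to establish the corollary for an arbitrary (possibly infinite) product $X=\prod_{s\in S}X_s$ of skeletally Dugundji spaces by reducing it to Theorem~\ref{3.3}, specifically to the equivalence of (i) and (iv). For each $s$, since $X_s$ is skeletally Dugundji, by Theorem~\ref{3.3} every $C^*$-embedding of the absolute $p(X_s)$ is strongly $\pi$-regular. The first step is to identify the absolute of the product: it is a standard fact (via the canonical irreducible maps $\theta_{X_s}\colon p(X_s)\to X_s$) that $p(\beta X)=p(\prod_{s}\beta X_s)$ is obtained as the absolute of $\prod_s \beta X_s$, and the natural irreducible map $\theta\colon p(\beta X)\to \prod_s\beta X_s$ factors through the product of the $\theta_{\beta X_s}$. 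Equivalently, one argues directly: $\beta X$ maps irreducibly and perfectly onto nothing smaller, but more usefully we pass to the multiplicative-lattice characterization (v).

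The cleaner route I would actually take is the following. By Theorem~\ref{3.3}, each $X_s$ has a multiplicative lattice $\Psi_s$ of skeletal maps. The plan is to build a multiplicative lattice $\Psi$ of skeletal maps on $X=\prod_s X_s$ as follows: for each finite (or arbitrary) subset $T\subseteq S$ and each choice of $\phi_s\in\Psi_s$ for $s\in T$, consider the map $\pi_T\colon X\to\prod_{s\in T}X_s$ composed with $\prod_{s\in T}\phi_s$; more precisely let $\Psi$ consist of all maps of the form $(\triangle_{s\in S}(\phi_s\circ\pi_s))$ where $\phi_s\in\Psi_s$ and all but countably many $\phi_s$ are the trivial one-point map, with the extra requirement that each $\phi_s(X_s)$ be second countable so that the resulting image has the right weight. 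Property (L2) (multiplicativity) is immediate because a diagonal product of such maps is again of this form (the support stays countable? — no: an arbitrary diagonal product may have uncountable support, so one should instead allow arbitrary support but require each coordinate map to lie in $\Psi_s$, and then (L2) holds trivially since $\Psi_s$ is closed under diagonal products coordinatewise). The key points to verify are that such a product map is skeletal, and that (L1) holds.

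For skeletality: a product $\prod_\alpha g_\alpha$ of skeletal maps between Tychonoff spaces need not in general be skeletal, but for a basic open box $U=\prod_{s}U_s$ (with $U_s=X_s$ for $s$ outside a finite set $F$) one computes $\int\cl(\prod g_s)(U)\supseteq \prod_{s\in F}\int\cl g_s(U_s)\times\prod_{s\notin F}g_s(X_s)$, and since each factor $\int\cl g_s(U_s)$ is nonempty (skeletality of $g_s$) and a finite product of nonempty open sets is a nonempty open set, the box on the right is nonempty open in the image; density arguments using that boxes form a base for $X$ finish it. For (L1): given any map $f\colon X\to f(X)$ with $w(f(X))=\kappa$, $f$ depends on at most $\kappa$ coordinates in a suitable sense (using that $f(X)$ embeds in $\mathbb I^\kappa$ and each coordinate function, being continuous on a product, depends on countably many coordinates up to the usual Glicksberg/Engelking argument), so $f$ factors through $\pi_T$ for some $T\subseteq S$ with $|T|\le\kappa$; then within $\prod_{s\in T}X_s$ apply (L1) for each $\Psi_s$ and take the diagonal product to get $\phi\in\Psi$ with $\phi\prec f$ and $w(\phi(X))\le\kappa$.

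I expect the main obstacle to be the skeletality of the product map — specifically, verifying that $\int_{\,\text{image}}\cl(\prod_s g_s)(U)\ne\varnothing$ for a basic box $U$, and that this suffices (i.e.\ that it extends from basic boxes to arbitrary open $U$). The finite-support part of the box behaves well, but one must be careful that the image $\prod_s g_s(X_s)$ carries the product topology of the $g_s(X_s)$ (it does, as a quotient/continuous bijective-on-boxes image, since each $g_s$ is surjective), and that taking closures commutes appropriately with the product — here one uses $\cl(\prod A_s)=\prod\cl A_s$ in a product and that $\int$ of a box with finitely many proper factors is the box of the interiors. Once skeletality of the generating maps is in hand, (L1), (L2), and the reduction via Theorem~\ref{3.3}(v)$\Rightarrow$(i) are routine, so the proof concludes that $X$ is skeletally Dugundji.
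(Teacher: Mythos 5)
Your route (building a multiplicative lattice of skeletal maps on $\prod_{s}X_s$ directly from lattices $\Psi_s$ on the factors and then invoking Theorem 3.3, (v)$\Rightarrow$(i)) is genuinely different from the paper's, but as written it has a real gap, and it is not where you expect it. The step you flag as the main obstacle is in fact the harmless one: a product of surjective skeletal maps is skeletal exactly by your box computation, since $\mathrm{Int}\,\overline{(\prod_s g_s)(U)}$ is monotone in $U$, so nonemptiness on basic boxes suffices; likewise (L2) is fine up to the usual identification of a diagonal product with the rearranged product map. The gap is in (L1). Your claim that any $f\colon X\to f(X)$ with $w(f(X))=\kappa$ factors through a subproduct of at most $\kappa$ coordinates rests on ``each coordinate function, being continuous on a product, depends on countably many coordinates (Glicksberg/Engelking)''. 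That is not a theorem for arbitrary Tychonoff factors: the classical dependence-on-countably-many-coordinates results need compact (or separable, or pseudocompact/ccc-type) hypotheses, and your $X_s$ are arbitrary skeletally Dugundji spaces -- you explicitly set aside the reduction to compact factors. Moreover, even granting $f=g\circ\pi_T$ with $|T|\le\kappa$, (L1) requires more than factoring through a subproduct: you must produce coordinatewise maps $\phi_s\in\Psi_s$ with small-weight images such that the \emph{product map} $\prod_s\phi_s$ refines $f$. You pass over this (``apply (L1) for each $\Psi_s$ and take the diagonal product'') without saying to which map on the single factor $X_s$ you apply (L1); this rectangular factorization of a map defined on a product through a product of maps is standard for compact factors (uniform continuity, finite covers by boxes) but is a genuine issue for general Tychonoff factors, and nothing in your sketch supplies it.

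For comparison, the paper avoids both difficulties by first reducing to compact factors: by Theorem 3.3, (iii)$\Rightarrow$(i), it suffices to exhibit \emph{one} compactification of $\prod_s X_s$ co-absolute to a Dugundji space, and $\prod_s\beta X_s$ is such a compactification (this also sidesteps your doubtful identification $p(\beta X)=p(\prod_s\beta X_s)$; in general $\beta(\prod_s X_s)\neq\prod_s\beta X_s$). Then, using (i)$\Rightarrow$(iv) for each compact factor, it takes strongly $\pi$-regular operators for the absolutes $p(\beta X_s)\subset\mathbb I^{A(s)}$, glues them over standard boxes into a strongly $\pi$-regular operator for $Z=\prod_s p(\beta X_s)$ inside $\prod_s\mathbb I^{A(s)}$, concludes from Proposition 3.1(ii) that $Z$ is skeletally Dugundji, and finishes because the product of the canonical maps $Z\to\prod_s\beta X_s$ is irreducible, so the two spaces are co-absolute. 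If you wish to keep the lattice approach, the cheapest repair is to perform the same reduction first (replace each $X_s$ by $\beta X_s$ and use (iii)); with compact factors your (L1) can then be completed by the uniform-continuity factorization of maps on products of compacta through countable subproducts and through products of quotient maps with metrizable images. Without that reduction (or a substitute such as ccc of all finite subproducts together with a Noble--Ulmer type factorization theorem), the argument as written does not go through.
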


\begin{proof}
Suppose $X =\prod_{\alpha\in A}X_\alpha$ with each $X_\alpha$ being skeletally Dugundji.
We can suppose that all $X_\alpha$ are compact spaces. Then, according
to Theorem 3.3, for each $\alpha$ there exists a strong $\pi$-regular operator
$\mathrm{e}\colon\Tee_{p(X_\alpha)}\to\Tee_{\mathbb I^{A(\alpha)}}$
 for some $A(\alpha).$ Denote by $\mathcal B$ the family of all
standard open sets $U = U_{\alpha_1}\times\ldots\times U_{\alpha_k}\times\{p(X_\alpha):\alpha\ne\alpha_i,i=1,\ldots ,k\}$ in 
$Z =\prod_{\alpha\in A} p(X_\alpha)$, and let $\reg'(U) = \reg_{\alpha_1}(U_{\alpha_1})
\times\ldots\times\reg_{\alpha_k}(U_{\alpha_k})\times\{\mathbb I^{A(\alpha)}:\alpha\ne\alpha_i,i=1,\ldots ,k\}$. Finally, the function $\reg\colon\Tee_Z\to\Tee_Y$, $\reg(W) =\bigcup\{\reg'(U):U\subset W, U\in\mathcal B\}$, where $Y =\prod_{\alpha\in A}\mathbb I^{A(\alpha)}$, is a strong $\pi$-regular
operator. Hence, by Proposition 3.1(ii), $Z$ is skeletally Dugundji. Since
the map $\theta =\prod_{\alpha\in A}\theta_\alpha\colon Z\to X$ is irreducible, $X$ and $Z$ are co-absolute.
Therefore, $X$ is skletally Dugundji.
\end{proof}

For any map $f\colon X\to Y$ let $p(f)$ denote the absolute of $f$. It is well known that $p(f)$ is a map from $p(X)\to p(Y)$ such that
$\theta_Y\circ p(f)=\theta_X\circ f$, where $p(X)$ and $p(Y)$ are the absolutes of $X$ and $Y$, respectively, and $\theta_X\colon p(X)\to X$,
$\theta_Y\colon p(Y)\to Y$ are the corresponding canonical maps. In general, a map $f$ can have many absolutes, but it has a unique one if $f$ is skeletal.

Next lemma was established in \cite[Lemma 1]{sh} in the case $X$ and $Y$ are compact, but the same arguments provide a more general version.

\begin{lem}
Let $f\colon X\to Y$ be a perfect skeletal map of a countable $\pi$-weight with $X$ and $Y$ being 0-dimensional Lindel\"{o}f p-spaces and $Y\in AE(0)$. Then there exist a space $Z$ and perfect maps $g\colon X\to Z$, $h\colon Z\to Y$ such that: $f=h\circ g$; $g$ is irreducible; $h$ is an open map having a metrizable kernel; $\dim Z=0$.
\end{lem}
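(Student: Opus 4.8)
The plan is to follow the proof of \cite[Lemma 1]{sh}, checking throughout that compactness of $X$ and $Y$ enters only via properties shared by $0$-dimensional Lindel\"{o}f $p$-spaces: images of closed sets under perfect maps are closed; in a strongly $0$-dimensional space every cozero set is a countable union of clopen sets; and $0$-dimensionality, the Lindel\"{o}f $p$-property and openness of maps are all preserved by the closed-subspace and continuous-inverse-limit operations that occur in the construction. To set up, first I would use $\pi w(f)\le\aleph_0$ together with the strong $0$-dimensionality of $X$ (it is $0$-dimensional and Lindel\"{o}f) to replace the functionally open sets witnessing the $\pi$-weight of $f$ by clopen ones: there is a countable family $\{U_n:n\in\omega\}$ of clopen subsets of $X$ with $\{U_n\cap f^{-1}(V):n\in\omega,\ V\in\Tee_Y\}$ a $\pi$-base for $X$; put $\psi=\triangle_n\chi_{U_n}\colon X\to 2^\omega$. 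Since $Y\in AE(0)$, I would also fix a representation $Y=\varprojlim\{Y_\alpha,q^\beta_\alpha,\alpha<\beta<\lambda\}$ as a continuous inverse system of $0$-dimensional Lindel\"{o}f $p$-spaces with open surjective bonding maps, $Y_0$ second countable, each $Y_{\alpha+1}$ embedded in $Y_\alpha\times 2^\omega$ so that $q^{\alpha+1}_\alpha$ is the restricted projection; write $\mu_\alpha\colon Y\to 2^\omega$ for the Cantor coordinate of $q^{\alpha+1}$.

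The construction itself builds, by transfinite recursion, a continuous inverse system $\{Z_\alpha,r^\beta_\alpha\}$ together with open surjections $h_\alpha\colon Z_\alpha\to Y_\alpha$ and maps $g_\alpha\colon X\to Z_\alpha$ with $h_\alpha\circ g_\alpha=q_\alpha\circ f$ and $r^\beta_\alpha\circ g_\beta=g_\alpha$. At the bottom set $Z_0=(q_0f\triangle\psi)(X)\subseteq Y_0\times 2^\omega$, with $g_0=q_0f\triangle\psi$ and $h_0$ the restricted projection onto $Y_0$; this $Z_0$ is $0$-dimensional separable metric. At a successor $\alpha+1$ the natural candidate is the fibered product $Z_{\alpha+1}=Z_\alpha\times_{Y_\alpha}Y_{\alpha+1}$: then $h_{\alpha+1}\colon Z_{\alpha+1}\to Y_{\alpha+1}$ is open as a pullback of the open map $h_\alpha$, the map $r^{\alpha+1}_\alpha\colon Z_{\alpha+1}\to Z_\alpha$ is open with a metrizable kernel as a pullback of $q^{\alpha+1}_\alpha$, and $g_{\alpha+1}=g_\alpha\triangle(\mu_\alpha f)$ maps $X$ into $Z_{\alpha+1}$ because $(h_\alpha g_\alpha(x),\mu_\alpha f(x))=q^{\alpha+1}f(x)\in Y_{\alpha+1}$. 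At a limit $\gamma$ take $Z_\gamma=\varprojlim_{\alpha<\gamma}Z_\alpha$ with the induced $h_\gamma$, $g_\gamma$. Finally put $Z=\varprojlim Z_\alpha$, $h=\varprojlim h_\alpha$, $g=\varprojlim g_\alpha$.

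Granting that the recursion can be arranged so that every $g_\alpha$ is surjective (this is the subtle point, discussed below), the conclusions follow. We have $f=h\circ g$ by construction; $h$ is open, being an inverse limit of open surjections between continuous systems; $h$ has metrizable kernels at all successor levels, so $Z$ is assembled from $Z_0$ exactly as in the definition of a skeletally Dugundji space; and $\dim Z=0$ since every $Z_\alpha$ is $0$-dimensional (a closed subspace, resp.\ a continuous inverse limit, of $0$-dimensional Lindel\"{o}f $p$-spaces) and this passes to the limit. For irreducibility: $g$ is perfect, being a limit of perfect maps of continuous systems, so it is a closed surjection; and if $W\subseteq X$ is nonempty open, pick $n$ and $V\in\Tee_Y$ with $\varnothing\ne U_n\cap f^{-1}(V)\subseteq W$ and a point $x_0$ in this set; since the coordinates $q_0$ and $(\mu_\alpha)_\alpha$ separate the points of $Y$ and $(\psi(x_0))_n=1$, one gets $g^{-1}(g(x_0))=\psi^{-1}(\psi(x_0))\cap f^{-1}(f(x_0))\subseteq U_n\cap f^{-1}(V)\subseteq W$, so $g$ is irreducible.

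The hard part is exactly the parenthetical above: there is a genuine tension in the successor step between the two requirements. Taking $Z_{\alpha+1}$ to be the full fibered product makes $h_{\alpha+1}$ (and hence $h$ in the limit) open, but then $g_{\alpha+1}$ need not be surjective --- indeed its image meets the fiber of $r^{\alpha+1}_\alpha$ over a point $z$ only in $\mu_\alpha(f(g_\alpha^{-1}(z)))$, which can be a proper compact subset of that fiber; conversely, taking $Z_{\alpha+1}=g_{\alpha+1}(X)$ keeps $g$ surjective but destroys the openness of $h_{\alpha+1}$. Reconciling the two is precisely what Shapiro does in the successor step of \cite[Lemma 1]{sh} in the compact case, and this is the only place where one must be careful. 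The task for the present, more general statement is to verify that compactness is used there only through compactness of the fibers of $f$ and of the bonding maps --- which still holds, since all these maps are perfect --- and through the stability of $0$-dimensionality and of the Lindel\"{o}f $p$-property under the closed-subspace and inverse-limit operations involved; once that is checked, Shapiro's argument carries over unchanged.
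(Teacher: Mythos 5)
Your write-up correctly identifies where the difficulty lies, but it does not resolve it, and that unresolved point is the entire content of the lemma. Note first that the elaborate transfinite scaffolding you build is superfluous: if at each successor step you take the full fibered product, then by induction $Z_\alpha\cong Z_0\times_{Y_0}Y_\alpha$ and the limit is just the single pullback $Z_0\times_{Y_0}Y$, for which $g$ need not be surjective (so irreducibility is lost); if instead you take images, the whole system collapses to the one-step construction $Z_0=(f\triangle\psi)(X)\subset Y\times 2^\omega$ with $g=f\triangle\psi$, which already gives an irreducible perfect $g$ and a perfect $h$ with metrizable kernel --- exactly by the fiber computation you use at the end ($g^{-1}(g(x))=f^{-1}(f(x))\cap\psi^{-1}(\psi(x))\subset U_n\cap f^{-1}(V)$). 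So everything except openness of $h$ is easy, and the sole substance of the lemma is to arrange that $h$ be open. That is precisely the step you label ``the hard part'' and then leave to Shapiro: it requires using the skeletality of $f$ and the hypothesis $Y\in AE(0)$ in an essential way (for instance by correcting the clopen family $\{U_n\}$, or dually by passing to a suitable relatively complete subalgebra containing the algebra generated by the images), and nothing in your sketch supplies such a mechanism. A proof that never actually invokes the hypotheses ``$f$ skeletal'' and ``$Y\in AE(0)$'' cannot be complete here, since without them the conclusion is false (an irreducible-times-open factorization with open $h$ does not exist for an arbitrary perfect map of countable $\pi$-weight). Your closing remark that one need only ``check that compactness enters only through perfectness'' in Shapiro's successor step is a plan for a proof, not a proof: the verification is exactly what is being asked for, and your own candidate construction (full fibered products) is one that you concede fails.

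For comparison, the paper itself gives no argument either: it states that the lemma ``was established in \cite[Lemma 1]{sh} in the case $X$ and $Y$ are compact, but the same arguments provide a more general version.'' So your proposal is in spirit the same citation-plus-claim, padded with a reduction (clopen $\pi$-base, the map $\psi$, the irreducibility computation) that is correct but covers only the routine part. As a standalone proof of the statement it has a genuine gap: the mechanism producing an \emph{open} $h$ compatible with an irreducible $g$ --- the step where $\dim X=\dim Y=0$, skeletality, and $Y\in AE(0)$ must all be used --- is missing, and the specific inverse-system scheme you propose cannot supply it.
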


\begin{thm}
The following conditions are equivalent for any \u{C}ech-complete Lindel\"{o}f p-space $X$:
\begin{itemize}
\item[(i)] $X$ is the limit space of a continuous inverse system $S=\displaystyle\{X_\alpha, \varphi^{\gamma}_\alpha, \alpha<\gamma<\tau\}$ with perfect skeletal bonding maps such that $X_0$ is a complete separable metric space and each $\varphi^{\alpha+1}_\alpha$ has a metrizable kernel;
\item[(ii)] $X$ is co-absolute with an $AE(0)$-space;
\item[(iii)] $X$ has a multiplicative lattice of perfect skeletal maps.
\end{itemize}
\end{thm}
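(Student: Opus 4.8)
The plan is to prove the circle $(ii)\Rightarrow(iii)\Rightarrow(i)\Rightarrow(ii)$. The implication $(ii)\Rightarrow(iii)$ is immediate from Corollary~2.4: an $AE(0)$-space has a multiplicative lattice of open maps, so a Lindel\"{o}f $p$-space co-absolute with it — in particular our \u{C}ech-complete Lindel\"{o}f $p$-space $X$ — has a multiplicative lattice of perfect skeletal maps. For $(iii)\Rightarrow(i)$ I would rerun the diagonal-product construction in the proof of Proposition~3.1(i), using perfectness to upgrade ``almost limit'' to ``limit''. Let $\Psi$ be a multiplicative lattice of perfect skeletal maps on $X$. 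Since $X$ is a \u{C}ech-complete Lindel\"{o}f $p$-space it admits a perfect map onto a separable metric space, so by $(\mathrm{L1})$ there is $\varphi_0\in\Psi$ with $w(\varphi_0(X))\le\aleph_0$; being a perfect image of a \u{C}ech-complete space, $X_0:=\varphi_0(X)$ is \u{C}ech-complete and second countable, hence a complete separable metric space. Enumerate $C^*(X)=\{f_\alpha:\alpha<\omega(\tau)\}$ with $f_0$ chosen so the associated $\phi_{f_0}\in\Psi$ refines $\varphi_0$, fix for each $f$ a $\phi_f\in\Psi$ with $\phi_f\prec f$ and $\phi_f(X)$ second countable, and put $\varphi_\alpha=\triangle_{\beta\le\alpha}\phi_{f_\beta}$ at isolated $\alpha$, $\varphi_\alpha=\triangle_{\beta<\alpha}\phi_{f_\beta}$ at limit $\alpha$, $X_\alpha=\varphi_\alpha(X)$. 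Each $\varphi_\alpha$ lies in $\Psi$, hence is perfect skeletal, and each bonding map $\varphi^{\gamma}_\alpha$, being a factor of $\varphi_\alpha$ through the surjection $\varphi_\gamma$, is perfect skeletal and surjective as well, with $\varphi^{\alpha+1}_\alpha$ of metrizable kernel since $X_{\alpha+1}$ embeds in $X_\alpha\times\phi_{f_{\alpha+1}}(X)$. The new point: at a limit $\gamma$ the composition of $\varphi_\gamma\colon X\to\displaystyle\lim_\leftarrow\{X_\alpha:\alpha<\gamma\}$ with the projection onto $X_0$ equals the perfect map $\varphi_0$, so $\varphi_\gamma$ is perfect and $X_\gamma=\varphi_\gamma(X)$ is closed in the inverse limit; it is also dense there, hence $X_\gamma=\displaystyle\lim_\leftarrow\{X_\alpha:\alpha<\gamma\}$ and the system is continuous. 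The same reasoning makes $X\to\displaystyle\lim_\leftarrow S$ a closed embedding with dense image, so $X=\displaystyle\lim_\leftarrow S$, which is $(i)$.

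The substantive direction is $(i)\Rightarrow(ii)$, and the preceding lemma is the engine. Let $X=\displaystyle\lim_\leftarrow S$ with $S=\{X_\alpha,\varphi^{\gamma}_\alpha,\alpha<\gamma<\tau\}$ continuous, bonding maps perfect skeletal (and, without loss of generality, surjective), $X_0$ complete separable metric, and $\varphi^{\alpha+1}_\alpha$ of metrizable kernel, hence of countable $\pi$-weight. Continuity together with perfectness of the bonding maps makes each $X_\alpha$ a perfect preimage of $X_0$, hence \u{C}ech-complete and Lindel\"{o}f $p$; so its absolute $p(X_\alpha)$ is $0$-dimensional, \u{C}ech-complete and Lindel\"{o}f $p$, and the $\varphi^{\gamma}_\alpha$ have unique absolutes $p(\varphi^{\gamma}_\alpha)$, again perfect skeletal, of countable $\pi$-weight on successor steps, forming a continuous system. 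I would then build by transfinite induction a continuous inverse system $T=\{Z_\alpha,\lambda^{\gamma}_\alpha,\alpha<\gamma<\tau\}$ of $0$-dimensional \u{C}ech-complete Lindel\"{o}f $p$ spaces that are $AE(0)$, with open perfect bonding maps, $\lambda^{\alpha+1}_\alpha$ of metrizable kernel, together with irreducible maps $g_\alpha\colon p(X_\alpha)\to Z_\alpha$ commuting with the bonding maps (so $Z_\alpha$ is co-absolute with $X_\alpha$). Start with $Z_0$ the Stone space of a countable dense subalgebra of the regular-open algebra of $X_0$: a metrizable compactum, hence an $AE(0)$-space, $0$-dimensional and co-absolute with $X_0$, with $g_0$ the canonical map $p(X_0)=p(Z_0)\to Z_0$. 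At a successor step apply the preceding lemma to $g_\alpha\circ p(\varphi^{\alpha+1}_\alpha)\colon p(X_{\alpha+1})\to Z_\alpha$ — a perfect skeletal map of countable $\pi$-weight between $0$-dimensional Lindel\"{o}f $p$-spaces with $Z_\alpha\in AE(0)$ — to factor it as $\lambda^{\alpha+1}_\alpha\circ g_{\alpha+1}$ with $g_{\alpha+1}$ irreducible, $\lambda^{\alpha+1}_\alpha$ open perfect with metrizable kernel, and $\dim Z_{\alpha+1}=0$; then $Z_{\alpha+1}$ is a perfect image of $p(X_{\alpha+1})$, hence \u{C}ech-complete Lindel\"{o}f $p$, and it is $AE(0)$ as an open preimage with metrizable kernel of the $AE(0)$-space $Z_\alpha$. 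At a limit $\gamma$ set $Z_\gamma=\displaystyle\lim_\leftarrow\{Z_\alpha:\alpha<\gamma\}$; the induced maps $\displaystyle\lim_\leftarrow g_\alpha\colon\displaystyle\lim_\leftarrow\{p(X_\alpha):\alpha<\gamma\}\to Z_\gamma$ and $\displaystyle\lim_\leftarrow\theta_{X_\alpha}\colon\displaystyle\lim_\leftarrow\{p(X_\alpha):\alpha<\gamma\}\to X_\gamma$ are both irreducible (an inverse limit of irreducible perfect surjections over a continuous system is an irreducible perfect surjection), whence $Z_\gamma$ and $X_\gamma=\displaystyle\lim_\leftarrow\{X_\alpha:\alpha<\gamma\}$ are co-absolute, and the maximality of the absolute among irreducible covers furnishes a \emph{compatible} irreducible $g_\gamma$ through $p(X_\gamma)$; the space-level properties of $Z_\gamma$ are checked as in $(iii)\Rightarrow(i)$. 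Finally $Y:=\displaystyle\lim_\leftarrow T$ is co-absolute with $X=\displaystyle\lim_\leftarrow S$ (the same irreducible-limit argument, now of length $\tau$) and, being the limit of a continuous inverse system of $AE(0)$-spaces with open perfect bonding maps, it is an $AE(0)$-space, which is $(ii)$.

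The step I expect to be the main obstacle is keeping this last induction coherent at limit stages, i.e. establishing the two facts used above: first, that an inverse limit of irreducible perfect surjections over a continuous system is again irreducible — a self-contained lemma about irreducible maps (via the characterization that a perfect surjection is irreducible exactly when every nonempty open set contains a fibre) — which is what preserves co-absoluteness of $Z_\gamma$ with $X_\gamma$ and, combined with the universal property of the absolute, lets one extract a bonding-compatible $g_\gamma$ factoring through $p(X_\gamma)$; and second, that the class of $AE(0)$-spaces is closed under limits of continuous inverse systems with open perfect bonding maps and metrizable kernels at successors over a separable-metric base — the non-compact Lindel\"{o}f $p$ analogue of ``a continuous open inverse limit of Dugundji compacta is Dugundji'', which can be obtained either from the corresponding closure property of $AE(0)$ or by extending maps from $0$-dimensional pairs stage by stage along $T$, using openness of the $\lambda^{\gamma}_\alpha$ to keep the partial extensions compatible. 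Everything else in the argument is bookkeeping resting on the preceding lemma, Corollary~2.4, and the diagonal-product construction of Proposition~3.1.
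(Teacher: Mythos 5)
Your route is the same as the paper's: the cycle $(i)\Rightarrow(ii)\Rightarrow(iii)\Rightarrow(i)$ with Corollary~2.4 giving $(ii)\Rightarrow(iii)$, the diagonal-product construction of Proposition~3.1(i) (upgraded by perfectness to a genuine limit) giving $(iii)\Rightarrow(i)$, and, for $(i)\Rightarrow(ii)$, a Shapiro-style transfinite construction of a continuous system of $0$-dimensional $AE(0)$-spaces $Z_\alpha$ co-absolute with the $X_\alpha$, built by applying Lemma~3.8 to $\theta_{Y_\alpha}\circ p(\varphi^{\alpha+1}_\alpha)$ at successor steps and taking limits at limit steps. The two limit-stage facts you single out (irreducibility of limits of irreducible perfect surjections over continuous systems, and stability of $AE(0)$ under such limits with open perfect bonding maps) are exactly what the paper also relies on, deferring to the arguments of \cite{sh}.

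There is, however, a concrete error at the base of your induction in $(i)\Rightarrow(ii)$. You take $Z_0$ to be the Stone space of a countable dense subalgebra of the regular-open algebra of $X_0$, i.e.\ a metrizable \emph{compactum}, and assert that it is co-absolute with $X_0$ and that $p(X_0)=p(Z_0)$. This is false whenever $X_0$ is not compact: the canonical map $\theta_{X_0}\colon p(X_0)\to X_0$ is perfect, so $p(X_0)$ is compact if and only if $X_0$ is, whereas $p(Z_0)$ is always compact; a compactum is never co-absolute with a non-compact space. The mistake is not cosmetic. Since all bonding maps of your system $T$ are perfect and $Z_0$ is compact, every $Z_\alpha$, and hence $Y=\lim_{\leftarrow}T$, would be compact, and a compact $Y$ cannot be co-absolute with a non-compact $X$ (e.g.\ a non-compact Polish space), so the construction fails precisely where it must deliver $(ii)$ -- and the non-compact case is the whole point of this theorem, the compact case being Shapiro's. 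What the base step needs is a \emph{non-compact} witness: a $0$-dimensional \u{C}ech-complete separable metrizable (hence Polish, hence $AE(0)$) space $Z_0$ co-absolute with $X_0$, for instance obtained from a perfect irreducible map of a zero-dimensional Polish space onto $X_0$; this existence statement requires its own (standard, but non-trivial) argument and cannot be replaced by passing to a compactification. With such a $Z_0$ the remainder of your induction is the paper's proof.
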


\begin{proof}
$(i)\Rightarrow (ii)$. Following the arguments from the proof of \cite[Theorem 2]{sh}, we construct an inverse system
 $E=\displaystyle\{Y_\alpha, q^{\gamma}_\alpha, \alpha<\gamma<\tau\}$ consisting of 0-dimensional $AE(0)$-spaces $Y_\alpha$ and open perfect bonding maps such that all $q^{\alpha+1}_\alpha$ have metrizable kernels and $Y_\alpha$ are co-absolute to $X_\alpha$. Assume we already have constructed
$Y_\alpha$. So, we have perfect irreducible maps $\theta_{X_\alpha}\colon p(X_\alpha)\to X_\alpha$ and $\theta_{Y_\alpha}\colon p(X_\alpha)\to Y_\alpha$. Since $\varphi^{\alpha+1}_\alpha$ is a perfect skeletal map with a metrizable kernel, its absolute $p(\varphi^{\alpha+1}_\alpha)\colon p(X_{\alpha+1})\to p(X_\alpha)$ is  a perfect open map of countable $\pi$-weight.
Then $\theta_{Y_\alpha}\circ p(\varphi^{\alpha+1}_\alpha)\colon p(X_{\alpha+1})\to Y_\alpha$  is a perfect skeletal map of countable $\pi$-weight.
Moreover, $Y_\alpha$ and $p(X_{\alpha+1})$ are both \u{C}ech-complete Lindel\"{o}f p-spaces (recall that the last property is invariant under images and preimages of perfect maps). Hence, by Lemma 3.8, there exist a 0-dimensional space $Y_{\alpha+1}$ and perfect maps
$q^{\alpha+1}_\alpha\colon Y_{\alpha+1}\to Y_{\alpha}$, $\phi\colon p(X_{\alpha})\to Y_{\alpha+1}$ such that $q^{\alpha+1}_\alpha$ is open with a
metrizable kernel and $\phi$ is irreducible. Since $p(X_{\alpha})$ is extremally disconnected and $\phi$ is irreducible, $p(X_{\alpha})$ is the absolute of $Y_{\alpha+1}$ and $\phi$ coincides with the canonical projection $\theta_{Y_{\alpha+1}}$. Because $Y_\alpha\in AE(0)$ and $q^{\alpha+1}_\alpha$ is open and has a metrizable kernel, $Y_{\alpha+1}\in AE(0)$.

If $\alpha<\tau$ is a limit ordinal and the construction was done for all $\beta<\alpha$, we take $Y_\alpha$ to be the limit of the inverse system
$\displaystyle\{Y_\beta, q^{\gamma}_\beta, \beta<\gamma<\alpha\}$. Using again that all $Y_\beta\in AE(0)$ and $q^{\beta+1}_\beta$ are open perfect maps with metrizable kernels, we obtain that $Y_\alpha$ is a 0-dimensional $AE(0)$-space which is co-absolute with $X_\alpha$. This completes the construction. Finally, observe that the space $Y=\displaystyle\lim_\leftarrow E$ is co-absolute to $X$ and $Y\in AE(0)$.

$(ii)\Rightarrow (iii)$. Since $X$ is co-absolute to an $AE(0)$-space $Y$, $Y$ is also a \u{C}ech-complete Lindel\"{o}f p-space. Consequently,
it has a multiplicative lattice of open perfect maps. Then, by Corollary 2.4, $X$ has a multiplicative lattice of skeletal perfect maps.

$(iii)\Rightarrow (i)$ This implication follows from the arguments used in the proof of Proposition 3.1(i).
\end{proof}

%%%%%%%%%%%%%%%%%%%%%%%%%%%%%%%%%%%%%%%%%%%%%%%%%%%%%%%%%%%%%%%%%%%%%%%%%%%%%%%%%%%%%%%%%%%%%%%%%%%%%%%%%%%%%%%%%%%%%%%%%%%%%%%%%%%%%%%

\end{document}